\documentclass[11pt,epsf]{article}
\usepackage{graphicx}
\usepackage{amsthm}
\usepackage{amsfonts}
\usepackage{amssymb}
\headsep 0cm
\headheight 0cm
\topmargin 0cm
\evensidemargin 0cm
\oddsidemargin 0cm
\textheight 23cm
\textwidth 16cm
\unitlength 1mm
\title{On the location of the complex conjugate zeros
  of the partial theta function}
\author{Vladimir Petrov Kostov\\ 
Universit\'e C\^ote d'Azur, CNRS, LJAD, France\\   
e-mail: vladimir.kostov@unice.fr} 
\date{}
\bibliographystyle{plain} 
\newtheorem{tm}{Theorem}

\newtheorem{rem}[tm]{Remark}
\newtheorem{rems}[tm]{Remarks}
\newtheorem{lm}[tm]{Lemma}
\newtheorem{ex}[tm]{Example}

\newtheorem{prop}[tm]{Proposition}
\newtheorem{nota}[tm]{Notation}
\begin{document} 
\maketitle

\begin{abstract}
We prove that for any $q\in (0,1)$, all complex conjugate pairs of zeros 
of the partial theta function $\theta (q,x):=\sum _{j=0}^{\infty}q^{j(j+1)/2}x^j$ 
with non-negative real part belong to the half-annulus
$\{$Re$(x)\geq 0,~1<|x|<5\}$, where the outer radius
cannot be replaced by a number 
smaller than $e^{\pi /2}=4.810477382\ldots$, and
that for $q\in (0,0.2^{1/4}=0.6687403050\ldots ]$,
  $\theta (q,.)$ has no zeros with non-negative real part.
  The complex conjugate pairs of zeros with
  negative real part belong to the left
  open half-disk of radius $49.8$ centered at the origin.\\ 

{\bf Key words:} partial theta function, Jacobi theta function, 
Jacobi triple product\\ 

{\bf AMS classification:} 26A06 
\end{abstract}

\section{Introduction}

In this article we study the location of the complex conjugate pairs of zeros
of the {\em partial theta function} defined as the sum of the series

\begin{equation}\label{equtheta}
  \theta (q,x):=\sum_{j=0}^{\infty}q^{j(j+1)/2}x^j
  \end{equation}
in which $(q,x)\in (0,1)\times \mathbb{R}$. We consider $x$ as a variable and
$q$ as a parameter. The name of the function matches its resemblance
with the {\em Jacobi theta function} $\Theta (q,x):=\sum_{j=-\infty}^{\infty}q^{j^2}x^j$, because
$\theta (q^2,x/q)=\sum_{j=0}^{\infty}q^{j^2}x^j$;
the function is ``partial'', because summation to $\infty$
is performed from $0$ and
not from $-\infty$. It satisfies the functional equation

\begin{equation}\label{equqqx}
  \theta (q,x)=1+qx\theta (q,qx)~.
  \end{equation}

The interest in the study of the analytic properties of $\theta$
is justified by its
applications in different domains. These include Ramanujan type $q$-series
(see~\cite{Wa}), asymptotic analysis (see~\cite{BeKi}), the theory of (mock)
modular forms (see \cite{BrFoRh}) and statistical physics and combinatorics
(see~\cite{So}). In \cite{BFM} and \cite{CMW} it is explained how the function
$\theta$ can be applied to the research of problems about asymptotics
and modularity of partial and false theta functions and their
interaction with representation theory and conformal field theory.

The partial
theta function has found its place in Ramanujan's lost notebook,
see~\cite{AnBe} and~\cite{Wa}; see also \cite{EM}
about Appell-Lerch sums (mock theta functions). Andrews-Warnaar identities for
the partial theta function are explored in~\cite{WM}, \cite{Wei}
and~\cite{Sun}. The
relationship between $\theta$ and Artin-Tits monoids are the object
of study in~\cite{FGM}. 
For the Pad\'e approximants of $\theta$
see~\cite{LuSa}.

A recent interest in the properties of $\theta$ is motivated by its connection
to section-hyperbolic polynomials. These are real polynomials
with positive coefficients all whose roots are real
negative and all whose finite sections (i.e. truncations)
are also with all roots real negative. This research domain was initiated
by Hardy, Petrovitch and Hutchinson (see~\cite{Ha}, \cite{Pe} and~\cite{Hu})
and continued in \cite{Ost}, \cite{KaLoVi} and \cite{KoSh}.

The analytic properties of the partial theta function are interesting in their 
own. In \cite{Pr} one can find an explicit combinatorial interpretation of the
coefficients of the leading root of $\theta$ as a series in~$q$. In papers
\cite{KoFAA}--\cite{KoAnn24} the author has explored some analytic properties
of~$\theta$, including the situation when $q$ and $x$ are complex,
with $(q,x)\in \mathbb{D}_1\times \mathbb{C}$, and the case when they are real,
with $q\in (-1,0)$.

For any $q\in (0,1)$ fixed, the partial theta function has infinitely-many
negative and finitely-many complex conjugate pairs of zeros, see~\cite{KoBSM1}.
It turns out that for $q\in (0,1)$, all complex conjugate pairs belong to the
domain

$$\mathcal{E}_+~:=~\{ ~x~\in ~\mathbb{C}~:~{\rm Re}~x~\in ~(-5792.7,0)~,~~
|{\rm Im}~x|~<~132~\}~
\cup ~\{ ~|x|~<~18~\} ~;$$ 
for $q\in (-1,0)$, the same statement holds true about the domain

$$\mathcal{E}_-~:=~\{ ~x~\in~\mathbb{C}~:~|{\rm Re}~x|~<~364.2~,~~
|{\rm Im}~x|~<~132~\} ~,$$
see \cite{KoFAA19}. These results are not trivial at all given that as
$q\rightarrow \pm 1^{\mp}$ convergence of the series (\ref{equtheta})
becomes worse.

Thus the complex zeros of $\theta$ remain ``not far'' from the origin for
any $q\in (-1,0)\cup (0,1)$. One can ask also the question how close to the
origin the zeros of $\theta$ can be found. In this direction the following
results
can be cited:

\begin{itemize}
\item (\cite[Theorem~1]{KoMatStud}) {\em For any fixed $q\in (0,1)$,
  the partial theta
  function has no zeros in the domain (see Fig.~\ref{Katsnelson})}

  $$\mathcal{D}:=\{ x~\in ~\mathbb{C}~:~|x|~\leq ~3~,~~{\rm Re}~x~\leq ~0~,~~
  |{\rm Im}~x|~\leq~
  3/\sqrt{2}~=~2.121320344\ldots \} ~.$$
\item
  (\cite[Proposition~2]{KoMatStud}) {\em For any $q\in (0,1)$ fixed, the
  function
  $\theta (q,.)$ has no real zeros $\geq -5$.}
\item
  (\cite[Theorem~1]{KoAnn24}) {\em For each $q\in (-1,0)\cup (0,1)$ fixed,
  the function $\theta$
  has no zeros in the closed unit disk $\overline{\mathbb{D}_1}$.} (This result
  is not true in the situation when $q$ and $x$ are complex, see an example
  in~\cite{KoAnn24}.)
  \end{itemize}

The first result of the present paper reads:

\begin{tm}\label{tm1}
  For $q\in (0,1)$, the complex conjugate pairs with non-negative real part
  (if any) of $\theta (q,.)$ belong to the half-annulus
  $\mathcal{A}:=\{ {\rm Re}\, x\geq 0,~1<|x|<5\}$ (see Fig~\ref{Katsnelson}).
\end{tm}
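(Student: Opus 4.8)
The plan is to establish the two inequalities $1<|x|$ and $|x|<5$ separately. The inner bound comes for free: by the cited result \cite{KoAnn24}, $\theta(q,\cdot)$ has no zeros in the closed unit disk $\overline{\mathbb{D}_1}$ for every $q\in(0,1)$, so every zero satisfies $|x|>1$. All the remaining work is devoted to the outer bound. Using the symmetry $\theta(q,\bar x)=\overline{\theta(q,x)}$ it suffices to treat a zero $x_0=re^{i\phi}$ with $\phi\in[0,\pi/2]$ (this being exactly the constraint $\mathrm{Re}\,x_0\ge0$), and to derive a contradiction from the assumption $r\ge5$.

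The first step is an \emph{exact} Gaussian rewriting. Writing $q=e^{-\epsilon}$ with $\epsilon>0$ and $x=e^{\xi}$, $\xi=\ln r+i\phi$, and completing the square in the exponent $-\tfrac\epsilon2 j^2+(\xi-\tfrac\epsilon2)j$ turns \eqref{equtheta} into
\[
\theta(q,x)=e^{\epsilon\mu^2/2}\,G(\mu),\qquad G(\mu):=\sum_{j\ge0}e^{-\frac\epsilon2(j-\mu)^2},\qquad \mu=\tfrac{\xi}{\epsilon}-\tfrac12 .
\]
Since the prefactor $e^{\epsilon\mu^2/2}$ never vanishes, the zeros of $\theta(q,\cdot)$ are precisely the zeros of $G$. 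Writing $\mu=\alpha+i\beta$ one has $\alpha=\tfrac{\ln r}{\epsilon}-\tfrac12$ and $\beta=\tfrac{\phi}{\epsilon}$, so that the hypothesis $\mathrm{Re}\,x\ge0$ reads $\epsilon\beta=\phi\le\pi/2$ and $r\ge5$ reads $\epsilon(\alpha+\tfrac12)=\ln r\ge\ln5$. The second step compares $G$ with the associated Jacobi theta value. I would split $G(\mu)=\Theta_\epsilon(\mu)-H(\mu)$, where $\Theta_\epsilon(\mu):=\sum_{j\in\mathbb{Z}}e^{-\frac\epsilon2(j-\mu)^2}$ and $H(\mu):=\sum_{k\ge1}e^{-\frac\epsilon2(k+\mu)^2}$ collects the omitted negative indices. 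Poisson summation (equivalently the Jacobi triple product) gives $\Theta_\epsilon(\mu)=\sqrt{2\pi/\epsilon}\sum_{k\in\mathbb{Z}}e^{-2\pi^2k^2/\epsilon}e^{-2\pi ik\mu}$, and the $k\neq0$ harmonics carry the factor $e^{2\pi k(\phi-\pi k)/\epsilon}$, which is $O(e^{-\pi^2/\epsilon})$ exactly because $\phi\le\pi/2$; hence $|\Theta_\epsilon(\mu)|\ge\sqrt{2\pi/\epsilon}\,(1-o(1))$, bounded away from $0$.

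For the boundary correction I would estimate $|H(\mu)|\le\sum_{k\ge1}e^{-\frac\epsilon2\,\mathrm{Re}(k+\mu)^2}=e^{\phi^2/(2\epsilon)}\sum_{k\ge1}e^{-\frac\epsilon2(k+\alpha)^2}$, and a Gaussian tail bound (whose leading $k=1$ term has exponent $\tfrac\epsilon2(1+\alpha)^2\sim (\ln r)^2/(2\epsilon)$) yields
\[
|H(\mu)|=O\!\big(\epsilon^{-1/2}\,e^{(\phi^2-(\ln r)^2)/(2\epsilon)}\big).
\]
The decisive quantity is the sign of $\phi^2-(\ln r)^2$: since $r\ge5$ gives $(\ln r)^2\ge(\ln5)^2=2.590\ldots$ while $\phi\le\pi/2$ gives $\phi^2\le(\pi/2)^2=2.467\ldots$, this exponent is $\le -c/\epsilon$ for a fixed $c>0$. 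Comparing with $|\Theta_\epsilon(\mu)|\ge\mathrm{const}\cdot\epsilon^{-1/2}$, the factors $\epsilon^{-1/2}$ cancel and $|H(\mu)|<|\Theta_\epsilon(\mu)|$ for all sufficiently small $\epsilon$; therefore $G(\mu)\neq0$ and $\theta(q,x_0)\neq0$, the desired contradiction.

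The hard part will be the point where $\phi^2-(\ln r)^2$ changes sign at the corner $\phi=\pi/2$: it vanishes exactly at $r=e^{\pi/2}=4.810\ldots$, which both explains why the outer radius cannot be lowered below $e^{\pi/2}$ and why the slim margin between $\ln5$ and $\pi/2$ leaves just enough room to absorb the crude constants. The real technical obstacle is that the comparison above is conclusive only for $\epsilon$ small, i.e.\ $q$ close to $1$; I would treat $q$ bounded away from $1$ (where $G$, equivalently $\theta$, is governed by its first few dominant terms) by the same splitting carried out with explicit $\epsilon$-uniform constants, or by a direct truncation estimate of the series at its dominant index, using the functional equation \eqref{equqqx} to control the tail. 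Converting the asymptotic $O$-bounds into fully effective inequalities valid on all of $(0,1)$ is the main burden of the proof.
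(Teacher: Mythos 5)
Your splitting of the partial theta into the full Gaussian sum minus the negative-index tail is, after the substitution $q=e^{-\epsilon}$, exactly the paper's decomposition $\theta =\Theta ^*-G$, and your Poisson-summation computation correctly isolates why $e^{\pi /2}$ is the true obstruction (the exponent $\phi ^2-(\ln r)^2$ with $\phi \leq \pi /2$, $r\geq 5$). But as written your argument proves the theorem only in the regime $\epsilon \rightarrow 0^+$, i.e. for $q$ sufficiently close to $1$, with non-effective constants; the margin $((\ln 5)^2-(\pi /2)^2)/2\approx 0.06$ means the comparison $|H|<|\Theta _\epsilon |$ is forced only for roughly $q>0.95$ once any concrete constant enters. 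Everything below that --- in particular the whole range $(\tilde{q}_1,\tilde{q}_{25}]\approx (0.309,0.940]$ where the complex conjugate pairs are actually born, and where (see Remark~\ref{remnum}) they cross the imaginary axis at modulus $\approx 2.9$ --- is deferred to a plan that is not carried out. That deferral is not a routine tidying-up: it is the entire content of the theorem, and it is where the paper spends almost all of its effort. The paper gets uniformity on $[0.5,1)$ by three non-asymptotic ingredients you have no substitute for: a monotonicity lemma (Lemma~\ref{lmarc}) showing $|\Theta ^*|$ on the arc $\{|x|=B\geq 5,\ |\arg x|\leq \pi /2\}$ is minimized at $x=\pm 5i$ and then at $B=5$; a numerical anchor $|\Theta ^*(0.5,\pm 5i)|>1/4$; and the monotonicity in $q$ of $|\Theta ^*(q,5i)|$ (Proposition~\ref{propmain}), whose proof via the logarithmic derivative of the triple product is the technical heart of the paper. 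For $q\in (0,0.5]$ the paper does not use the domination argument at all: it invokes the spectrum structure (at most one complex pair there, Remarks~\ref{remsspectrum}) together with part (2) of Theorem~\ref{tm2}, proved by a completely different method (common real zeros of the real and imaginary parts on the imaginary axis).

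A further concrete objection to your fallback plan: a ``direct truncation estimate of the series at its dominant index'' cannot close the gap, because zeros of $\theta$ arise precisely from cancellation among terms of comparable size, and $\theta$ genuinely has zeros of modulus $\geq 5$ (real ones, and complex ones with negative real part of modulus up to $\approx 49.8$). Any workable argument must exploit the constraint $|\arg x|\leq \pi /2$ through the phase structure of the full theta function --- via the triple product (whose zeros $-q^{-m}$ all lie on the negative real axis) or via Poisson summation --- and must do so with explicit constants valid for moderate $q$. Your proposal exploits that structure only asymptotically, so the proof as it stands establishes a weaker statement: for every $\delta>0$ there is $q_\delta<1$ such that for $q\in (q_\delta ,1)$ the zeros with non-negative real part have modulus $<e^{\pi /2}+\delta$. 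That is a nice result, and closer in spirit to part (1) of Theorem~\ref{tm2}, but it is not Theorem~\ref{tm1}.
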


The proof is given in Section~\ref{secprtm1}.
It uses part (2) of Theorem~\ref{tm2}; the latter is proved independently of
Theorem~\ref{tm1}. Part (1) of Theorem~\ref{tm2} 
gives an idea
how far from optimal the size of the half-annulus $\mathcal{A}$~is:

\begin{tm}\label{tm2}
  (1) There exists a sequence of values $q_{s}$
  of $q$ and pairs of
  purely imaginary zeros of
  $\theta (q_{s},.)$ whose modulus tends
  as $s\rightarrow \infty$ to a quantity
  which is~$\geq e^{\pi /2}=4.810477382\ldots$.

  (2) For $q\in (0,0.2^{1/4}=0.6687403050\ldots ]$,
    the function $\theta (q,.)$ has no zeros with non-negative
    real part.
\end{tm}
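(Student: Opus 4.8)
The plan is to treat the two parts separately, since they have opposite flavors: part~(1) is an existence/asymptotic statement as $q\to 1^-$, whereas part~(2) is a non-existence statement for small $q$.

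For part~(1) I would restrict $\theta$ to the imaginary axis, writing $x=it$ with $t>0$ and splitting $\theta(q,it)=P(q,t)+iQ(q,t)$ into its even- and odd-index parts,
\[
P(q,t)=\sum_{k\ge 0}(-1)^k q^{k(2k+1)}t^{2k},\qquad Q(q,t)=\sum_{k\ge 0}(-1)^k q^{(k+1)(2k+1)}t^{2k+1}.
\]
A purely imaginary zero is exactly a common real zero of $P$ and $Q$. To locate such common zeros as $q\to 1^-$ I would set $q=e^{-\varepsilon}$ and $x=e^{\rho+i\psi}$ and apply Poisson summation (equivalently a saddle-point analysis) to $\theta(e^{-\varepsilon},x)$. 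Completing the square in the exponent $-\tfrac{\varepsilon}{2}j(j+1)+j\ln x$, the leading contribution grows like $\exp\bigl((\rho^2-\psi^2)/2\varepsilon\bigr)$, while the genuinely ``partial'' corrections coming from the truncation at $j=0$ stay $O(1)$. On the imaginary axis $\psi=\pi/2$, so the leading term is comparable in modulus to the correction exactly when $\rho\approx\pi/2$, i.e. $|x|\approx e^{\pi/2}$. I would then produce, for each large $s$, a value $q_s=e^{-\varepsilon_s}$ with $\varepsilon_s\to 0$ for which the oscillation forces $P(q_s,\cdot)$ and $Q(q_s,\cdot)$ to vanish at a common $t_s$ with $t_s$ tending to a limit $\ge e^{\pi/2}$; the Jacobi triple product for $\Theta$, reached through $\theta(q^2,x/q)=\sum_j q^{j^2}x^j$, furnishes the explicit model against which the partial-theta zeros are compared. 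The main obstacle here is rigour: one must control the error terms in the saddle/Poisson expansion uniformly as $\varepsilon\to 0$ and solve the two simultaneous real equations $P=Q=0$ in the two-parameter family $(q,t)$.

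For part~(2) I would first confine any hypothetical zero $x_0$ with $\mathrm{Re}\,x_0\ge 0$. By \cite{KoAnn24} there are no zeros in $\overline{\mathbb{D}_1}$, so $|x_0|>1$; and since the part of $\mathcal{E}_+$ lying in $\{\mathrm{Re}\,x\ge 0\}$ is forced into $\{|x|<18\}$ (the other component of $\mathcal{E}_+$ requires $\mathrm{Re}\,x<0$), the confinement of \cite{KoFAA19} yields $1<|x_0|<18$. It then remains to exclude zeros on the compact region $\{\mathrm{Re}\,x\ge 0,\ 1\le|x|\le 18\}$ for $0<q\le 5^{-1/4}$. Writing $x=re^{i\phi}$ with $\phi\in[-\pi/2,\pi/2]$, I would estimate
\[
\mathrm{Re}\,\theta(q,x)=1+\sum_{j\ge 1}q^{j(j+1)/2}r^{j}\cos(j\phi),
\]
grouping the terms by the sign of $\cos(j\phi)$ and bounding the negative contributions by the constant $1$ together with the positive ones. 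The super-exponential decay of $q^{j(j+1)/2}$ is decisive, and the threshold $q=5^{-1/4}$ (that is, $q^{4}=1/5$) is precisely where the comparison between consecutive triangular-exponent terms becomes tight; this explains the appearance of the constant $5$ and links part~(2) to the outer radius of Theorem~\ref{tm1}.

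Because the naive modulus bound $|\theta-1|\le\sum_{j\ge1}q^{j(j+1)/2}r^{j}$ is hopeless on the whole disk $\{|x|<18\}$ (for $q=5^{-1/4}$ the dominant term sits near $j\approx 7$ and is of size several thousand), the genuine difficulty in part~(2) is the angular bookkeeping: one cannot obtain $\mathrm{Re}\,\theta>0$ by simply discarding phases. I expect to handle this either by a refined, $q$-dependent confinement that shrinks the radius $18$ towards the value $5$ of Theorem~\ref{tm1} before estimating, or by iterating the functional equation $\theta(q,x)=1+qx\,\theta(q,qx)$, which sends a right half-plane zero $x_0$ to the points $q^k x_0$ (still in the right half-plane) and eventually into $\overline{\mathbb{D}_1}$, where two-sided bounds on $\theta$ contradict the growth of $|\theta(q,q^k x_0)|$ forced by the recursion $\theta(q,q^{k}x_0)=1+q^{k+1}x_0\,\theta(q,q^{k+1}x_0)$. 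Reconciling these estimates over the full angular range is the step I anticipate to be the main obstacle.
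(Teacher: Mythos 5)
Both parts of your proposal stop exactly at the step that constitutes the actual proof, so there are genuine gaps. In part (1), your reduction of purely imaginary zeros to common real zeros of the even and odd parts $P$ and $Q$ is the same first step as the paper's, and your saddle-point heuristic does point to the right constant $e^{\pi/2}$ (the point where Katsnelson's contour meets the imaginary axis). But saying that ``the oscillation forces'' $P$ and $Q$ to vanish at a common point is not an argument: the zeros of the real and imaginary parts of an analytic function along a line generically interlace without coinciding, and no amount of error control in a Poisson/saddle expansion produces a coincidence by itself. The paper's mechanism, which you are missing, is structural: writing $\theta (q,x)=\theta (v,v^{-1/4}x^2)+v^{1/4}x\,\theta (v,v^{1/4}x^2)$ with $v=q^4$, both $P$ and $Q$ become rescalings of the \emph{same} function $\theta (v,\cdot )$, so their positive zeros are $\chi_k=v^{1/8}(-\xi_k^*)^{1/2}$ and $\mu_k=v^{-1/8}(-\xi_k^*)^{1/2}$, two copies of one zero set differing by the fixed factor $v^{1/4}$. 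As $v$ increases to a spectral value $\tilde{q}_{k_2}$ the zeros $\chi_{2k_2-1}$ and $\chi_{2k_2}$ collide, and an intermediate-value argument in $v$ then yields some $v^*_{k_2}$ with $\mu_{2k_2-1}=\chi_{2k_2}$, i.e. a common zero. The limit $e^{\pi /2}$ comes not from a saddle point but from the asymptotics $y_k\rightarrow -e^{\pi}$ of the double zeros together with $\tilde{q}_k\rightarrow 1^-$, plus a diagonal choice of indices ensuring $v^*_{k_2}\rightarrow 1^-$. Without this (or an equivalent) device, your part (1) remains a heuristic, as you yourself concede.

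Part (2) has a gap of the same kind. You attempt to exclude zeros from the whole region $\{ {\rm Re}\, x\geq 0,\ 1\leq |x|\leq 18\}$ by estimating ${\rm Re}\,\theta$, admit that the estimate is hopeless, and offer two undeveloped alternatives. The paper's key observation makes all of that unnecessary: for $q\in (0,\tilde{q}_1)$ all zeros are real negative and zeros move continuously in $q$, so a zero can acquire non-negative real part only by first crossing the imaginary axis at some smaller value of the parameter; hence it suffices to exclude \emph{purely imaginary} zeros for all $q\in (0,0.2^{1/4}]$. By the same decomposition as in part (1), this amounts to showing $\chi_j\neq \mu_s$ for all $j,s$ when $v=q^4\in (0,0.2]$, which the paper does by a finite case analysis ($s\geq j$, $s\leq j-3$, $s=j-2$, $s=j-1$) using the interlacing bounds $-q^{-2k}<\xi_{2k}<\xi_{2k-1}<-q^{-2k+1}$ together with their sharpening in Lemma~\ref{lm02} ($\xi_{2k+1}\in (-q^{-2k-1.2},-q^{-2k-1})$, $\xi_{2k+2}\in (-q^{-2k-2},-q^{-2k-1.8})$), whose proof is exactly where the hypothesis $v\leq 0.2$ enters. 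So the constant $0.2^{1/4}$ arises from that lemma, not from a tightness of consecutive triangular-exponent terms as you speculate. Your functional-equation iteration also does not substitute for this: the relation $\theta (q,x)=1+qx\theta (q,qx)$ determines the values $\theta (q,q^kx_0)$ recursively along the orbit of a zero, but extracting a contradiction from them requires quantitative two-sided control that you do not supply.
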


Theorem~\ref{tm2} is proved in Section~\ref{secprtm2}.
The number $e^{\pi /2}$ is not far from $5$, see Theorem~\ref{tm1},
while $5$ is much smaller than $18$, see the domain
$\mathcal{E}_+$ above.

Our next theorem suggests a location of the complex zeros of $\theta$ with
negative real part, a location which is much smaller than $\mathcal{E}_+$:

\begin{tm}\label{tm3}
  The complex conjugate pairs of zeros of $\theta (q,.)$ which are with
  negative real part belong to the left
  open half-disk of radius $49.8$ centered at the origin.
  \end{tm}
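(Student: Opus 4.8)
The plan is to recast Theorem~\ref{tm3} as a non-vanishing assertion: it suffices to show that $\theta (q,x)\neq 0$ whenever ${\rm Re}\,x\leq 0$ and $|x|\geq 49.8$, for every $q\in (0,1)$. By the enclosure $\mathcal{E}_+$ recalled in the introduction we already know that any complex zero with ${\rm Re}\,x<0$ satisfies ${\rm Re}\,x\in (-5792.7,0)$ and $|{\rm Im}\,x|<132$, so the genuine task is to eliminate the far-left part of this box and to shrink the imaginary extent from $132$ down below $49.8$. For small $q$ only a few terms of the series are relevant and the claim follows from a direct low-degree analysis together with the known absence of real zeros $\geq -5$; the substance lies in the range of $q$ near $1$.

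Fix $q$ and write $x=re^{i\phi}$ with $r=|x|$, $\phi\in[\pi/2,3\pi/2]$, and $a_j:=q^{j(j+1)/2}$. Let $k=k(q,r)$ be an index maximising $a_jr^j$; the maximality conditions $q^kr\geq 1\geq q^{k+1}r$ are equivalent to $\rho:=q^{(2k+1)/2}r\in[q^{1/2},q^{-1/2}]$. Since $a_{k+m}/a_k=q^{m(2k+1)/2}q^{m^2/2}$, factoring out the leading term gives $\theta (q,x)=a_kx^k(1+R)$ with $1+R=\sum_{m\geq -k}q^{m^2/2}\rho^me^{im\phi}$. Because $q^{m^2/2}$ decays super-geometrically, completing the sum to a bilateral one costs only an error $\varepsilon_k$ exponentially small in $k$ (of size $q^{k(k+1)/2}$-type), and identifies the main part with the Jacobi theta value $\Theta (q^{1/2},\rho e^{i\phi})=\sum_{m=-\infty}^{\infty}q^{m^2/2}(\rho e^{i\phi})^m$. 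Thus $\theta (q,x)=a_kx^k(\Theta (q^{1/2},\rho e^{i\phi})+\varepsilon_k)$.

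The point of this reduction is the Jacobi triple product $\Theta (Q,z)=\prod_{n\geq 1}(1-Q^{2n})(1+Q^{2n-1}z)(1+Q^{2n-1}z^{-1})$. With $Q=q^{1/2}$ all zeros of $\Theta (q^{1/2},\cdot)$ are real negative, located at $z=-q^{\pm(2n-1)/2}$, and within the admissible modulus range $\rho\in[q^{1/2},q^{-1/2}]$ only the two boundary values $z=-q^{\pm1/2}$ occur. Hence for $\rho$ in the interior of its range the factor $\Theta (q^{1/2},\rho e^{i\phi})$ is bounded away from $0$ uniformly in $\phi$, and once $r$ is large enough that $|\varepsilon_k|$ drops below this lower bound we conclude $\theta (q,x)\neq 0$. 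The only dangerous configurations are thus those in which the dominant index switches, i.e. $\rho\approx q^{\pm1/2}$, which is exactly where the real negative zeros of $\theta$ live; complex zeros must cluster near these switches and near $\phi=\pi$, and they are a finite-$r$ effect, because as $r\to\infty$ one has $k\to\infty$ and $\varepsilon_k\to 0$, leaving in the limit only the real solutions of $\Theta (q^{1/2},\rho e^{i\phi})=0$.

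It remains to treat a neighbourhood of each index switch. There I would retain the two competing leading terms and analyse the reduced equation they produce together with the first theta correction, showing that an off-axis solution forces $r$ to be small; equivalently, an off-axis zero is a perturbation of a nearby multiple real zero of $\theta (q,\cdot)$ on $(-\infty,0)$, whose modulus is controlled recursively through the functional equation $\theta (q,x)=1+qx\,\theta (q,qx)$ and through the absence of real zeros $\geq -5$ (\cite[Proposition~2]{KoMatStud}). The genuine obstacles are, first, to make every estimate uniform as $q\to 1^{-}$, where $k\sim (\ln r)/|\ln q|\to\infty$, $\rho\to1$ and $q^{1/2}\to1$, so that the comparison theta degenerates and its two boundary zeros pinch the admissible range; and second, to pin the explicit constant $49.8$, which demands sharp numerical control both of the tail $\varepsilon_k$ and of the extremal near-switch zeros. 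I expect the near-switch analysis to be the crux, precisely because there the naive triangle-inequality bound on $|R|$ must fail, since genuine zeros lie nearby.
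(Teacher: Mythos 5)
Your proposal has a genuine gap, and in fact two distinct problems. First, the opening reduction is false as stated: it does \emph{not} suffice to show $\theta(q,x)\neq 0$ for ${\rm Re}\,x\leq 0$, $|x|\geq 49.8$, because $\theta(q,\cdot)$ has infinitely many real negative zeros $\xi_j\approx -q^{-j}$ of arbitrarily large modulus, so the non-vanishing assertion must be restricted to non-real $x$ from the start. Second, and more seriously, the substance of your argument only covers the easy region. The dominant-term factorization $\theta(q,x)=a_kx^k(\Theta(q^{1/2},\rho e^{i\phi})+\varepsilon_k)$ and the triple-product lower bound on $|\Theta|$ work only when $\rho$ stays away from the boundary values $q^{\pm 1/2}$ and, worse, degenerate entirely as $q\to 1^-$ (the admissible annulus pinches, and the factor $\prod_n(1-q^n)$ in the triple product tends to $0$). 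But the non-real zeros with negative real part live precisely near these index switches, near $\phi=\pi$, and predominantly for $q$ close to $1$; and the bound $49.8$ has to discriminate between radius $\approx 38.9$ (where double zeros, hence nearby complex pairs, genuinely exist) and radius $49.8$. You explicitly defer this near-switch analysis (``I would retain the two competing leading terms\ldots'', ``I expect the near-switch analysis to be the crux''), so what remains is a plan, not a proof: no mechanism in your setup produces the constant $49.8$, and no uniform-in-$q$ estimate is established.

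It is worth seeing why the paper does not even attempt your static (fixed-$q$) estimate. Its proof is a continuity-in-$q$ (barrier) argument: each complex conjugate pair is \emph{born} at a double real zero $y_s$ of $\theta(\tilde q_s,\cdot)$, and by \cite[Theorem~5]{KoSe} these satisfy $y_s>-38.9$ for $s\geq 15$; Lemma~\ref{lmx0} (proved via the Jacobi triple product, i.e., the one place where your $\Theta$-comparison idea does appear) shows that whenever $|\theta(q,x_0)|\geq 1$ at a real point $x_0<-5$, the whole circle $|z|=|x_0|$ is free of zeros; such points $x_0$ are produced by the functional equation (\ref{equqqx}) at $x_\dagger/q\in(-q^{-2s-3},-q^{-2s-2})$, giving for each $q\in[\tilde q_s,\tilde q_{s+1})$ a barrier circle of radius $q^{-2s-3}$ that the continuously moving zeros cannot cross; the theorem then reduces to the numerical bound $\tilde q_s^{-2s-3}<49.8$ for $s\geq 4$, plus separate arguments (half-integer powers of $q$, and for $c_1$ the coefficient identity $q^3=\sum_{k<m}1/\xi_k\xi_m$ from the Hadamard product) for the first three pairs. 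This barrier-plus-continuity structure is exactly what circumvents the near-switch estimate that your approach cannot supply, because on the barrier circles the function is provably bounded away from zero even though zeros exist both inside and (real ones) outside.
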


The theorem is proved in Section~\ref{secprtm3}.
The next section reminds some analytic properties of the partial
theta function.

\section{The zeros and the spectrum of the partial theta function
\protect\label{seczerosspectrum}}

\subsection{Zeros and spectrum of $\theta$ for $q\in (0,1)$}

We define {\em the spectrum} $\Gamma$ of $\theta$ as the set of values of
the parameter $q$ for which $\theta (q,.)$ has a multiple zero (this notion
was introduced by B.~Shapiro in \cite{KoSh}). When $q$ and
$x$ are real, the spectrum consists of real values of $q$. When
$q$ and $x$ are complex, the spectrum contains also complex values, see
\cite[Proposition~8]{KoAA}. For $q\in (-1,0)$, properties of the spectrum of
$\theta$ can be found in~\cite{KoPRSE2}. In this subsection we remind
facts about the zeros and the spectrum of $\theta$ for $q\in (0,1)$:

\begin{tm}{\rm (\cite[Theorem~1]{KoBSM1})} (1) The spectrum $\Gamma$ consists of
  countably-many values of $q$ denoted by $0<\tilde{q}_1<\tilde{q}_2<\cdots$,
  where $\lim_{j\rightarrow \infty}\tilde{q}_j=1^-$.

  (2) For $\tilde{q}_N\in \Gamma$, the function $\theta(\tilde{q}_N,.)$ 
has exactly one multiple real zero which is of multiplicity
$2$ and is the rightmost of its real zeros.

(3) For $q\in (\tilde{q}_N,\tilde{q}_{N+1})$ (we set $\tilde{q}_0:=0$), 
the function $\theta$ has exactly $N$ complex conjugate pairs of zeros
(counted with multiplicity).
\end{tm}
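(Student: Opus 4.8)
The plan is to fix $q\in(0,1)$, regard $\theta(q,\cdot)$ as an entire function, and follow its zeros as $q$ increases from $0$ to $1$. First I would note that the coefficients $q^{j(j+1)/2}$ decay faster than any geometric progression, so $\theta(q,\cdot)$ is entire of order $0$; by Hadamard's factorization theorem it equals $\prod_k(1-x/\xi_k(q))$ with no exponential factor, and since all its coefficients are positive, every real zero is negative. I would also invoke the quoted facts that $\theta$ has no zeros in $\overline{\mathbb{D}_1}$ and no real zero $\geq-5$, which confine the zeros to $\{|x|>1\}$ and keep the real ones in $(-\infty,-5)$, i.e. bounded away from the origin.

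For part (1), the spectrum $\Gamma$ is the set of $q$ at which $\theta(q,\cdot)$ and $\partial_x\theta(q,\cdot)$ share a zero. Since $\theta$ is jointly analytic in $(q,x)$ and the double zeros turn out to be nondegenerate (see below), each spectral value is isolated; exhibiting one $q$ with only simple zeros --- for $q$ near $0$ one has $\theta\approx 1+qx$, and a Rouch\'e-type comparison shows the zeros are then real and simple and $N=0$ --- rules out $\Gamma=(0,1)$. Hence $\Gamma$ is discrete and countable, which orders it as $\tilde q_1<\tilde q_2<\cdots$. The limit $\tilde q_j\to1^-$ I would deduce from the facts that $\Gamma$ cannot accumulate at an interior point (discreteness together with the finiteness, for each fixed $q$, of the number of complex pairs) nor at $0$ (simplicity for small $q$), while the number $N(q)$ of complex conjugate pairs is unbounded as $q\to1^-$ (heuristically, $q^{j(j+1)/2}\to1$ and $\theta$ tends formally to $1/(1-x)$, a degeneration that forces ever more zeros off the real axis).

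The core is the local bifurcation at a spectral value $\tilde q$ with multiple real zero $x_0$. Setting $q=\tilde q+\varepsilon$ and Taylor-expanding about $(\tilde q,x_0)$ (where $\theta=\partial_x\theta=0$),
$$\theta(q,x)=\frac{1}{2}\,\partial_{xx}\theta\,(x-x_0)^2+\partial_q\theta\,\varepsilon+\partial_{qx}\theta\,\varepsilon(x-x_0)+\cdots,$$
with all partial derivatives taken at $(\tilde q,x_0)$, the two zeros near $x_0$ satisfy $(x-x_0)^2\approx-2\varepsilon\,\partial_q\theta/\partial_{xx}\theta$; hence, provided $\partial_{xx}\theta(\tilde q,x_0)\neq0$ (so the multiplicity is exactly $2$) and $\partial_q\theta(\tilde q,x_0)\neq0$ (transversality), the pair is real on one side of $\tilde q$ and complex on the other. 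I would obtain $\partial_{xx}\theta>0$ from the convexity structure of $\theta$ on the negative axis: a colliding pair appears as an interior local minimum of $x\mapsto\theta(q,x)$ whose (negative) value rises to $0$, where necessarily $\partial_{xx}\theta>0$; the envelope identity $\frac{d}{dq}\theta(q,x_c(q))=\partial_q\theta$ at the critical point then fixes the sign, giving $\partial_q\theta>0$ and a real-to-complex transition as $q$ increases. Thus $N(q)$ jumps by exactly $+1$ at each spectral value, which with $N(0^+)=0$ yields part (3): $N(q)=N$ for $q\in(\tilde q_N,\tilde q_{N+1})$. Since across $\tilde q$ the number of real zeros drops by exactly $2$ and $N$ rises by exactly $1$, precisely one pair collides into a single double zero of multiplicity $2$, giving the uniqueness and multiplicity assertions of part (2).

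What remains for part (2) is that the double zero is the rightmost real zero. Reading leftward from $x=0$, where $\theta=1>0$, the first sign change is at the rightmost real zero, so the first interior extremum is the minimum sitting between the two rightmost real zeros; the claim is that this rightmost minimum is the one whose value reaches $0$ first as $q$ grows. Proving this ordering is, I expect, the main obstacle. I would attack it by studying the values of the successive local minima of $\theta(q,\cdot)$ as functions of $q$ and establishing a monotonicity statement to the effect that the rightmost minimum decreases to $0$ strictly before any minimum further to the left, exploiting the functional equation $\theta(q,x)=1+qx\,\theta(q,qx)$ to compare $\theta$ with its rescaling $\theta(q,qx)$ and thereby control how each extremal value depends on $q$. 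Making this comparison rigorous --- and simultaneously ruling out the coincidental collision of two distinct pairs, which the discreteness of $\Gamma$ already renders non-generic --- is where the real difficulty lies.
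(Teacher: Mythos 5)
This theorem is not proved in the present paper at all: it is background quoted verbatim from \cite{KoBSM1}, so your attempt can only be measured against the argument in that cited work, and against it your proposal has several genuine gaps. The most serious one is that your bifurcation analysis is circular at its key step. You need to know that, as $q$ increases through a spectral value, a real pair turns complex rather than the reverse; for this you need the sign of $\partial_q\theta$ at the double zero. You obtain it from the ``envelope identity'' applied to a local minimum whose value ``rises to $0$'' --- but that the minimum rises is precisely the directional statement to be proved, so nothing has been established. The standard way out (used in the cited literature, and visible in the identity $2q\,\partial_q\theta=2x\,\partial_x\theta+x^2\,\partial_{xx}\theta$, which follows by comparing coefficients) is that at a point where $\theta=\partial_x\theta=0$ one gets $2q\,\partial_q\theta=x^2\,\partial_{xx}\theta$, so the sign of $\partial_q\theta$ is tied to that of $\partial_{xx}\theta$, and transversality follows from nondegeneracy of the minimum. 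Without such an identity you have no independent control on $\partial_q\theta$. Relatedly, you never prove that the multiple zero has multiplicity exactly $2$ (i.e.\ $\partial_{xx}\theta\neq0$), nor that a spectral value cannot arise from a collision of two complex conjugate pairs off the real axis; dismissing the latter as ``non-generic'' is not an argument for a specific one-parameter family.

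Beyond the local analysis, two global inputs are asserted rather than proved, and both are hard points of \cite{KoBSM1}: (i) that for each fixed $q$ the number of complex conjugate pairs is \emph{finite} --- without this, the counting function $N(q)$ and the phrase ``exactly $N$ pairs'' in part (3) are meaningless, and your discreteness argument for $\Gamma$ collapses; and (ii) that $N(q)\to\infty$ as $q\to1^-$, which you need for $\lim\tilde q_j=1^-$ but justify only heuristically by ``$\theta$ tends formally to $1/(1-x)$''. Note that Katsnelson's theorem, quoted in Section~2 of this paper, shows this convergence holds locally uniformly \emph{inside} Katsnelson's contour, a region free of zeros, so the degeneration does not by itself force zeros off the real axis. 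Finally, you yourself concede that the ``rightmost'' assertion of part (2) is left unproven. Taken together, the proposal is a reasonable program --- Hadamard factorization, discreteness of $\Gamma$, transversal bifurcation, zero counting --- but the three items above (sign of $\partial_q\theta$, finiteness of the number of complex pairs together with unboundedness of $N(q)$, and the rightmost property) are exactly the substance of the cited theorem, and none of them is actually established.
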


\begin{rems}\label{remsspectrum}
  {\rm (1) All coefficients of $\theta$ as a function in $x$ being positive for
    $q\in (0,1)$, $\theta$ has no positive zeros.
\vspace{1mm}

    (2) As it was mentioned in \cite{KoSh}, the former students
    A.~Broms and I.~Nilsson have calculated
    the ﬁrst 25 spectral numbers with an accuracy of 12 decimal positions.
    Their list with 6 decimals reads:}

    $$\begin{array}{lllll}
      0.309249~,&0.516959~,&0.630628~,&0.701265~,&0.749269~,\\ \\
      0.783984~,&0.810251~,&0.830816~,&0.847353~,&0.860942~,\\ \\
      0.872305~,&0.881949~,&0.890237~,&0.897435~,&0.903747~,\\ \\
      0.909325~,&0.914291~,&0.918741~,&0.922751~,&0.926384~,\\ \\
      0.929689~,&0.932711~,&0.935482~,&0.938035~,&0.940393~.\end{array}$$
  {\rm Hence for $q\in (0,\tilde{q}_1=0.309249\ldots ]$,
  there are no complex zeros and
  for $q\in (\tilde{q}_1,\tilde{q}_2=0.516959\ldots )$,
  there is exactly one complex conjugate pair.
\vspace{1mm}

(3) For $q\in (0,\tilde{q}_1)$, all zeros of $\theta$ are negative, so
they form a strictly decreasing sequence: $\cdots <\xi_2<\xi_1<0$. The zeros
of each of its derivatives w.r.t. the variable $x$ are also real negative.  
For $q\in (0,\tilde{q}_1)$, it is true that:
\begin{enumerate}
  \item 
At
even (resp. at odd) zeros the function $\theta (q,.)$ is decreasing
(resp. increasing). 
  \item 
$\theta >0$ for $x\in (\xi_{2k+1},\xi_{2k})$ and
    $\theta <0$ for $x\in (\xi_{2k+2},\xi_{2k+1})$
    (see \cite[Proposition~6]{KoBSM1}).
  \item
    For $k\in \mathbb{N}^*$, one has $\theta (q,-q^{-k})\in (0,q^k)$ (see
    \cite[Proposition~9]{KoBSM1}). Hence for $q\in (0,\tilde{q}_1)$,
    the following inequalities hold true:

    \begin{equation}\label{equstring}
      -q^{-2k}~<~\xi_{2k}~<~\xi_{2k-1}~<~-q^{-2k+1}~.\end{equation}
\end{enumerate}

(4) As $q$ increases and takes the value $\tilde{q}_k$, $k=1$, $2$, $\ldots$,
    the zeros $\xi_{2k-1}$ and $\xi_{2k}$ coalesce to form a double zero
    $y_{k}$ and then a complex conjugate pair. If one denotes by $\xi_k^{\ell}$
    the real zeros of
    $\partial ^{\ell}\theta /\partial x^{\ell}(q,.)$,
    $\ell =0$, $1$, $\ldots$, then
    for $q\in (0,\tilde{q}_N)$, the following zeros $\xi_k^{\ell}$ are
    well-defined continuous real-valued functions in $q$:
    $\cdots <\xi_k^{\ell}<\cdots <\xi_{2N-1}^{\ell}<0$
    (\cite[Corollary~2]{KoBSM1}). Items 1 and 2 of part (3) of these remarks
    and the inequalities (\ref{equstring}) remain
    valid for any $q\in (0,1)$ for the indices $j$ 
    for which the corresponding zeros $\xi_{j}$ are
    well-defined.
    \vspace{1mm}
    
    (5) The following asymptotic
    formulae can be found in~\cite{KoBSM2}:

    \begin{equation}\label{equasympt}
      \tilde{q}_k=1-\pi /2k+(\ln k)/8k^2+O(1/k^2)~~~\, {\rm and}~~~\,
      y_k=-e^{\pi}e^{-(\ln k)/4k+O(1/k)}.
      \end{equation}
%
%

    (6) As $q$ varies in $(0,1)$, the zeros of the partial theta function
    do not go to or arrive from infinity. Indeed, the complex zeros remain in
    the domain $\mathcal{E}_+$. For the real zeros it is true that
    $\lim_{k\rightarrow\infty}\xi_kq^k=-1$ (\cite[Theorem~4]{KoBSM1}). That is,
    they can be approximated by the terms of a geometric progression with
    ratio $1/q$. One can also use as argument the fact that for $j$
    sufficiently large, there is just one zero $\xi_j$ such that
    $|q|^{-j+1/2}<|\xi_j|<|q|^{-j-1/2}$, see~\cite{KoAA}. Similar
    remarks are valid in the cases $q\in (-1,0)$ and~$q\in \mathbb{D}_1$.}
\end{rems}

\subsection{Katsnelson's contour}

We define {\em Katsnelson's contour} as the union of two arcs in
$\mathbb{R}^2$

$$(x,y)=(e^t\cos t,\pm e^t\sin t)~,~~~\, t\in [0,\pi ]~,$$
see
Fig.~\ref{Katsnelson}. In \cite{Ka}
the following result is to be found:

\begin{tm}
  The family of functions $f_{\varepsilon}(z)=\sum_{n=0}^{\infty}e^{-\varepsilon n^2}z^n$
  with $\varepsilon >0$ 
  converges as $\varepsilon \rightarrow 0^+$ to the function $1/(1-z)$
  locally uniformly w.r.t. $z$ in the interior of Katsnelson's contour.
  Outside that contour one has
  $\overline{\lim_{\varepsilon \rightarrow 0^+}}|f_{\varepsilon}(z)|=\infty$.
\end{tm}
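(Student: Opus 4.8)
The plan is to treat $f_\varepsilon$ via a Gaussian (Fourier/Laplace) integral representation and to read off both assertions from the position of the poles of the integrand relative to a fixed contour. Starting from the Fourier transform of the Gaussian, $e^{-\varepsilon n^2}=\frac{1}{\sqrt{4\pi\varepsilon}}\int_{-\infty}^{\infty}e^{-u^2/(4\varepsilon)}e^{inu}\,du$, and summing the resulting geometric series (legitimate for $|z|<1$, where the interchange of sum and integral is justified by absolute convergence) I obtain $f_\varepsilon(z)=\frac{1}{\sqrt{4\pi\varepsilon}}\int_{-\infty}^{\infty}e^{-u^2/(4\varepsilon)}\frac{du}{1-ze^{iu}}$. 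The integrand has simple poles exactly where $ze^{iu}=1$, i.e. at $u_j=(2\pi j-\phi)+i\ln r$, where I write $z=re^{i\phi}$ with $\phi\in(-\pi,\pi]$; their common height $\ln r$ is negative when $|z|<1$, so the real axis runs below all of them and the representation is valid.

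Since $f_\varepsilon$ is entire, I would obtain its values for $|z|\ge 1$ by analytic continuation along a radial path on which $\phi$ is fixed and $r$ grows past $1$. As $r$ crosses $1$ all the poles $u_j$ cross the real axis upward simultaneously, so the continued function equals the same integral over a contour lying below the $u_j$; pushing that contour back to $\mathbb{R}$ expresses $f_\varepsilon(z)$ as the real-axis integral plus $2\pi i$ times the residues picked up. A direct computation gives $\mathrm{Res}_{u=u_j}(1-ze^{iu})^{-1}=i$, so the residue contribution is a constant multiple of $\varepsilon^{-1/2}\sum_{j}e^{-u_j^2/(4\varepsilon)}$, whose $j$-th term has modulus $e^{[(\ln r)^2-(2\pi j-\phi)^2]/(4\varepsilon)}$. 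For $\phi\in(-\pi,\pi)$ the term $j=0$, of modulus $e^{[(\ln r)^2-\phi^2]/(4\varepsilon)}$, dominates all the others.

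Both claims now follow from the position of $z$ relative to the locus $\ln r=|\phi|$, which is exactly what Katsnelson's contour records: on its arcs $z=e^{(1\pm i)t}$ one has $\ln r=\pm\phi$, hence $\ln r=|\phi|$, the interior being $\{\ln r<|\phi|\}$ (which contains the whole disc $|z|<1$) and the exterior $\{\ln r>|\phi|\}$, where necessarily $r>1$. Inside, either $r<1$ and no pole was crossed, or $r\ge 1$ and $(\ln r)^2<\phi^2$, so every residue term decays exponentially in $1/\varepsilon$; meanwhile the real-axis integral is an approximate-identity integral tending to the value of $(1-ze^{iu})^{-1}$ at $u=0$, namely $1/(1-z)$. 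Hence $f_\varepsilon\to 1/(1-z)$, and since a compact subset of the interior stays a fixed distance from the contour the decay is uniform, giving local uniform convergence. In the exterior the condition $\ln r>|\phi|\ge 0$ forces $(\ln r)^2-\phi^2>0$, so the $j=0$ residue has modulus $\sim\varepsilon^{-1/2}e^{[(\ln r)^2-\phi^2]/(4\varepsilon)}\to\infty$ and exceeds the bounded integral term and all other residues by an exponential factor, forcing $|f_\varepsilon(z)|\to\infty$.

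I expect the residue bookkeeping and the Gaussian estimate to be routine, while the genuine obstacle is the analytic control of the real-axis integral after the deformation: one must verify that it stays bounded and converges to $1/(1-z)$ uniformly in $\varepsilon$ while the poles sit at height $\ln r$ off the axis, handle the principal-value subtleties near $z=1$ and near the rays $\phi=\pm\pi$, and confirm that the infinite sum of residues converges and is dominated by its $j=0$ term so that no cancellation can restore boundedness in the exterior.
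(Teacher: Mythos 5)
First, a point of order: the paper does not prove this statement at all. It is Katsnelson's theorem, quoted in Subsection~2.2 from the reference [Ka], and used only as background; so there is no internal proof to compare yours against, and I can only assess your argument on its own terms. Your skeleton is the natural one (and almost certainly close in spirit to Katsnelson's): the Gaussian transform, the summed representation $f_{\varepsilon}(z)=\frac{1}{\sqrt{4\pi\varepsilon}}\int_{\mathbb{R}}\frac{e^{-u^2/(4\varepsilon)}}{1-ze^{iu}}\,du$ for $|z|<1$, the poles $u_j=(2\pi j-\phi)+i\ln r$, the residue value $i$, and the residue terms of modulus $\varepsilon^{-1/2}e^{[(\ln r)^2-(2\pi j-\phi)^2]/(4\varepsilon)}$ are all correct, and they do exhibit Katsnelson's contour as exactly the locus $\ln r=|\phi|$ separating decay from blow-up of the leading term.

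There are, however, two genuine gaps, and one of them makes a stated conclusion false rather than merely unproved. (a) On the part of the exterior lying on the negative real axis ($\phi=\pi$, $r>e^{\pi}$) there is no dominant residue: the $j=0$ and $j=1$ terms have equal modulus, and more generally the terms pair up into $2e^{[(\ln r)^2-(2j-1)^2\pi^2]/(4\varepsilon)}\cos\bigl((2j-1)\pi\ln r/(2\varepsilon)\bigr)$; when $\cos\bigl(\pi\ln r/(2\varepsilon)\bigr)=0$ every one of these cosines vanishes simultaneously, so along that subsequence of $\varepsilon\to 0^+$ the whole residue contribution disappears and $|f_{\varepsilon}(z)|$ stays bounded by the real-axis integral. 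Hence your claim ``forcing $|f_{\varepsilon}(z)|\to\infty$'' is false on this ray; only $\overline{\lim}_{\varepsilon\rightarrow 0^+}|f_{\varepsilon}(z)|=\infty$ holds (choose $\varepsilon$ along which the cosine equals $1$), which is precisely why the theorem is stated with an upper limit. Your final step must be split into the case $\phi\in(-\pi,\pi)$, where your dominance argument gives a true limit, and the case $\phi=\pm\pi$, where a subsequence must be exhibited. (b) For the local uniform convergence, compact subsets of the interior do meet the unit circle (e.g.\ $z=i$, since the contour touches $|z|=1$ only at $z=1$), and there the poles sit on $\mathbb{R}$, so your decomposition ``real-axis integral plus residues'' degenerates: the bound on the integral blows up like $1/|\ln r|$. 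You need to replace $\mathbb{R}$ by a contour dipping below the poles near their real parts $2\pi j-\phi$ (which stay at distance bounded away from $u=0$ on such a compact) and run the approximate-identity estimate on that contour, with the residue terms still decaying uniformly because $(\ln r)^2-\phi^2$ is bounded away from $0$ on the compact. Both repairs are standard, but as written the proposal is a correct program with two unfinished, and in case (a) uncorrected, steps rather than a complete proof.
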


\begin{figure}[htbp]
\centerline{\hbox{\includegraphics[scale=0.7]{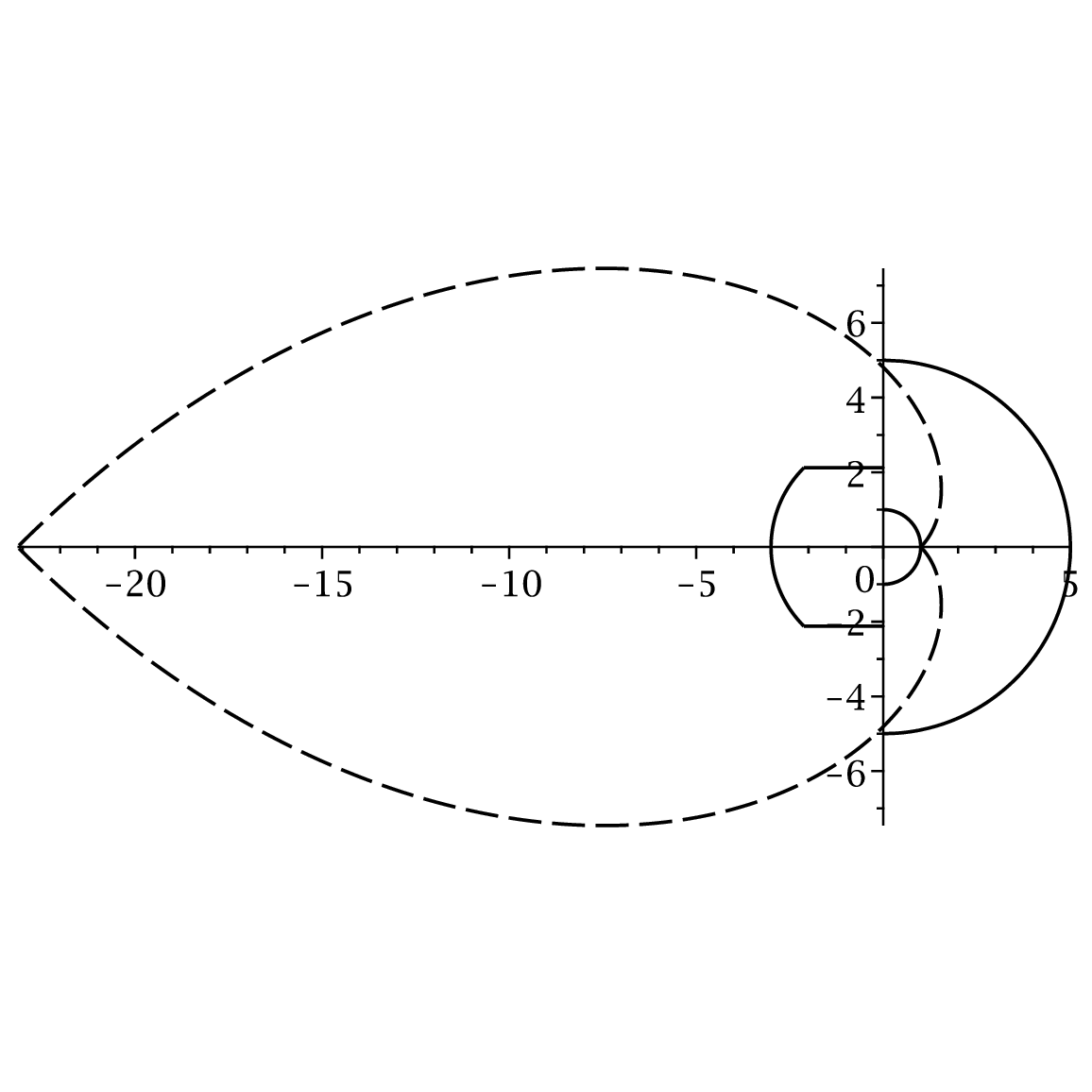}}}
\caption{Katsnelson's contour (in dashed line) and the borders
  of the domain $\mathcal{D}$ and the half-annulus $\mathcal{A}$
  (in solid line).}
\label{Katsnelson}
\end{figure}

\begin{rems}\label{remsKats}
  {\rm (1) The domain inside Katsnelson's contour is much larger than
    the unit disk
in which the Taylor series of the function $1/(1-z)$ converges. Using the
notation of the present paper one can say that as $q\rightarrow 1^-$, 
$\theta (q,x)$ tends to $1/(1-x)$ locally uniformly inside
Katsnelson's contour. Indeed, set $q:=e^{-2\varepsilon}$. Then
$f_{\varepsilon}=\theta (q,z/\sqrt{q})$.
\vspace{1mm}

(2) The part of the domain inside Katsnelson's contour belonging to the right
half-plane is much smaller than its part in the left half-plane. This can be
compared to the fact that one finds much more precisely the possible location
of the complex zeros of $\theta$ with positive than of the ones with
negative real parts, see Theorems~\ref{tm1} and \ref{tm3}
and the domain $\mathcal{E}_+$.
\vspace{1mm}

(3) One can observe that to Katsnelson's contour belong the points
    $e^{(\pi /2)(1\pm i)}$, see~Theorem~\ref{tm2}.
    \vspace{1mm}

    (4) The angle between the tangent line to Katsnelson's contour and the
    radius-vector is everywhere equal to $\pm \pi /4$, so this line is
    vertical for $t=\pi /4$, i.~e. at the points

    $$e^{(\pi /4)(1\pm i)}~,~~~\, {\rm where}~~~\,
    e^{(\pi /4)}\cos (\pi /4)=e^{(\pi /4)}\sin (\pi /4)=1.550883197\ldots ~.$$
    It is horizontal at the points

    $$e^{(3\pi /4)(1\pm i)}~,~~~\, {\rm where}~~~\,
    e^{(3\pi /4)}\cos (3\pi /4)=-e^{(3\pi /4)}\sin (3\pi /4)=-7.460488536\ldots ~.$$
    
 (5) It would be interesting to know whether for any $q\in (\tilde{q}_1,1)$,
    all complex conjugate pairs of zeros of $\theta (q,.)$ belong to the
    interior of Katsnelson's contour.
    \vspace{1mm}

    (6) For $q=0.8$, the function $\theta_{100}(q,x):=\sum_{j=0}^{100}q^{j(j+1)/2}x^j$
    (the 100th truncation of $\theta$) has zeros

    $$b_{\pm}:=0.6128998489\ldots \pm 2.37247194\ldots i~~~{\rm of~modulus}~~~
    2.450361061\ldots$$
    lying inside Katsnelson's contour. Given that the
    101th term of $\theta$ is of modulus $<2\times 10^{-460}$ and that the next
    terms decrease in modulus faster than a geometric progression with ratio
    $3.2\times 10^{-10}$, and finally that the value of the derivative 

    $$(\partial \theta /\partial x)(0.8,b_{\pm})=
    -0.6143813197\ldots \mp 1.099995004\ldots i$$
    is of modulus $>1.25$, it seems likely that $\theta (0.8,.)$
    has conjugate zeros close to $b_{\pm}$. See Remark~\ref{remnum}
    for other similar numerical results.}
\end{rems}


\section{Proof of Theorem~\protect\ref{tm1}\protect\label{secprtm1}}

\subsection{The method of proof\protect\label{subsecmethod}}

By the result of \cite{KoAnn24} cited before Theorem~\ref{tm1}
the partial theta function has no zeros with $|x|\leq 1$, so we prove only
that it has no zeros in the domain

$$\mathcal{F}~:=~\{ ~|x|~\geq ~5~,~~-\pi /2~\leq ~{\rm arg}\, x~\leq ~
\pi /2~\}~.$$
We consider the function $\Theta^*$
defined with the help of the
Jacobi theta function as follows:

$$\Theta^*(q,x)=\Theta (\sqrt{q},\sqrt{q}x)=
\sum_{j=-\infty}^{\infty}q^{j(j+1)/2}x^j~.$$
The function $\Theta$ can be represented by the Jacobi triple product 

$$\Theta (q,x^2)=\prod_{m=1}^{\infty}(1-q^{2m})(1+x^2q^{2m-1})(1+x^{-2}q^{2m-1})$$
from which one can deduce the equality

\begin{equation}\label{equT}
  \begin{array}{ccl}
    \Theta^*(q,x)&=&\prod_{m=1}^{\infty}(1-q^m)(1+xq^m)(1+q^{m-1}/x)\\ \\
    &=&(1+1/x)\prod_{m=1}^{\infty}(1-q^m)(1+xq^m)(1+q^m/x)~.\end{array}
\end{equation}
We set $G:=\sum_{j=-\infty}^{-1}q^{j(j+1)/2}x^j$. Thus $\theta =\Theta^*-G$. We prove
that in the domain $\mathcal{F}$ 
one has $|\Theta^*|>|G|$ from which the theorem follows. For $|x|\geq 5$,
the function $G$ satisfies the inequality

\begin{equation}\label{equG}
  |G|\leq \sum_{j=1}^{\infty}5^{-j}=1/4~.
\end{equation}
We use the following lemma:

 \begin{lm}\label{lmarc}
      Suppose that $q\in [0.5,1)$ is fixed. Then:

      (1) For {\rm arg}$x\in [-\pi /2,\pi /2]$, if $|x|=:B\geq 5$ is fixed,
      then $|\Theta^*(q,x)|$
      is minimal for $x=\pm Bi$.      

      (2) For $x=\pm Ri$, $R\geq 5$, $|\Theta^*(q,x)|$
      is minimal for $x=\pm 5i$.
    \end{lm}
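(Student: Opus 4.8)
The plan is to start from the Jacobi triple product representation (\ref{equT}), which gives
$$|\Theta^*(q,x)|=\Big(\prod_{m=1}^{\infty}(1-q^m)\Big)\prod_{m=1}^{\infty}|1+xq^m|\cdot\prod_{m=1}^{\infty}|1+q^{m-1}/x|.$$
The first product is a positive constant independent of $x$, so after taking logarithms it suffices to understand how the two sums $\sum_m\ln|1+xq^m|$ and $\sum_m\ln|1+q^{m-1}/x|$ vary with $x$. Since the terms decay geometrically, these series and their termwise derivatives converge uniformly on the relevant compact sets, so I may minimize termwise and differentiate termwise without scruple.

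For part (1) I would write $x=Be^{i\phi}$ with $B\geq5$ fixed and $\phi\in[-\pi/2,\pi/2]$, whence
$$|1+xq^m|^2=1+2Bq^m\cos\phi+B^2q^{2m},\qquad |1+q^{m-1}/x|^2=1+2(q^{m-1}/B)\cos\phi+(q^{m-1}/B)^2.$$
Both are strictly increasing functions of $\cos\phi$, and $\cos\phi$ ranges over $[0,1]$ as $\phi$ runs through $[-\pi/2,\pi/2]$, attaining its minimum $0$ exactly at $\phi=\pm\pi/2$. Hence, with $B$ fixed, every factor of both products is minimized precisely at $x=\pm Bi$; a product of positive quantities each minimized at the same argument is minimized there, which proves (1).

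For part (2) I would set $x=Ri$ (the value $x=-Ri$ gives the same modulus since $\Theta^*$ has real coefficients). Here $|1+xq^m|^2=1+R^2q^{2m}$ and $|1+q^{m-1}/x|^2=1+q^{2(m-1)}/R^2$, so with $L(R):=\ln|\Theta^*(q,Ri)|$ and $a:=q^2\in[1/4,1)$ one gets
$$L'(R)=\frac1R\Big(\sum_{m=1}^{\infty}\frac{u_m}{1+u_m}-\sum_{k=0}^{\infty}\frac{v_k}{1+v_k}\Big),\qquad u_m:=R^2a^m,\quad v_k:=a^k/R^2,$$
and it remains to show the bracket is positive for $R\geq5$. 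The idea is to pair the term of index $m\geq1$ in the first sum with the term of index $k=m$ in the second: since $u_m=R^4v_m>v_m$ and $t\mapsto t/(1+t)$ is increasing, every paired difference is positive, so the only unmatched contribution is the single term $v_0/(1+v_0)$ of the second sum. As $v_0=1/R^2\leq1/25$, that term is at most $1/26<0.039$, whereas the $m=1$ pairing alone contributes at least $u_1/(1+u_1)-v_1/(1+v_1)\geq 6.25/7.25-1/25>0.8$ (using $u_1=R^2a\geq25\cdot(1/4)$ and $v_1=a/R^2<1/25$). Thus the bracket is positive, $L$ is strictly increasing on $[5,\infty)$, and the minimum occurs at $R=5$.

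The routine parts are (1) and the differentiation in (2); the crux will be the positivity of the bracket. A naive bound fails because, as $q\to1^-$, both series in $L'(R)$ blow up, so one cannot simply dominate the subtracted series by a convergent geometric series with constant independent of $q$. The decisive point is the exact relation $u_m=R^4v_m$, which lets the two series be compared termwise and collapses the entire discrepancy onto the lone term $v_0$; that term is uniformly tiny because $R\geq5$, and it is swamped by the slack already present in the first paired term. This is exactly where the hypotheses $R\geq5$ and $q\geq1/2$ enter quantitatively.
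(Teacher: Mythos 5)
Your proof is correct, and its skeleton is the same as the paper's: the Jacobi triple product, with part (1) argued exactly as in the paper (each factor's squared modulus is increasing in $\cos\varphi$, hence all factors are simultaneously minimized at $\varphi=\pm\pi/2$). Where you differ is the execution of part (2). The paper keeps the factorization multiplicative: for $m\ge 2$ it notes that the pair $(1+q^mx)(1+q^m/x)=1+q^{2m}+iq^m(R-1/R)$ at $x=Ri$ has constant real part and increasing imaginary part, hence increasing modulus; the problematic decreasing factor $1+1/x$ is then absorbed into the $m=1$ pair, and the derivative $\partial(|T|^2)/\partial R$ of the triple product $T=(1+1/x)(1+qx)(1+q/x)$ is expanded explicitly and checked to be positive for $R\ge 5$, $q\in[0.5,1)$. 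You instead pass to the logarithmic derivative of the whole product, use the relation $u_m=R^4v_m$ to make every paired contribution positive, and bound the single unmatched (negative) term $v_0/(1+v_0)\le 1/26$ by the slack $>0.8$ coming from the $m=1$ pair. Both arguments thus isolate the identical crux --- the lone decreasing factor $1+1/x$, compensated by the $m=1$ pair, which is exactly where $R\ge 5$ and $q\ge 1/2$ enter --- so this is the same proof in additive rather than multiplicative form. Your version is somewhat cleaner: it treats all $m\ge 1$ uniformly and avoids the polynomial expansion of $|T|^2$ (where the paper's printed formulas for $T$ and its derivative contain harmless sign/coefficient slips that do not affect positivity), at the modest cost of having to justify termwise differentiation of an infinite series, a step the paper's factor-by-factor monotonicity argument never needs.
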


    \begin{proof}
      Part (1). Set $x:=Re^{i\varphi}$, $R>0$, $\varphi \in [-\pi /2,\pi /2]$. 
      Using the cosine rule one obtains

      $$\begin{array}{cccccc}
        |1+q^{m-1}/x|&=&|1+(q^{m-1}R^{-1})e^{-i\varphi}|&=&\sqrt{1+q^{2m-2}R^{-2}+
          2q^{m-1}R^{-1}\cos (\varphi )}&{\rm and}\\ \\
        |1+q^mx|&=&|1+(q^mR)e^{-i\varphi}|&=&\sqrt{1+q^{2m}R^{2}+
          2q^mR\cos (\varphi )}&\end{array}$$
      which quantities are minimal for $\varphi =\pm \pi /2$.
      \vspace{1mm}
      
      Part (2). For $m\geq 2$, the real and imaginary part of each product

      \begin{equation}\label{equ2factors}
        (1+q^mx)(1+q^m/x)=1+q^{2m}+iq^m(R-1/R)
        \end{equation}
      are non-decreasing in $R$ for $R\geq 5$ and the modulus of the product is 
      increasing. For the square of the modulus of the triple product

      $$T:=(1+1/x)(1+qx)(1+q/x)=1+q^2+q/R^2-q+i(-q/R+qR+1/R+q^2/R)$$
      one gets

      $$\partial (|T|^2)/\partial R=-4q^2/R^5-6q^2/R^3+2q^2R-2/R^3-2q^4/R^3$$
      which is positive for $R\geq 5$, $q\in [0.5,1)$. 
      \end{proof}

    For $(q,x)=(0.5,\pm 5i)$, one computes the values of $\Theta^*$ and
    $|\Theta^*|$ numerically:

    \begin{equation}\label{equnum}
      \begin{array}{ccl}
        \Theta^*(0.5,\pm 5i)&=&-1.542068340\ldots \pm 0.4429511372\ldots i~,\\ \\ 
      |\Theta^*(0.5,\pm 5i)|&=&1.604425279\ldots >1/4~,\end{array}
    \end{equation}
    see (\ref{equG}), so $\theta (0.5,.)$ has no zeros in the domain
    $\mathcal{F}$, see Lemma~\ref{lmarc}.

    \begin{prop}\label{propmain}
      For $q\in [0.5,1)$, one has $d(|\Theta^*(q,5i)|)/dq>0$
        (hence $d(|\Theta^*(q,-5i)|)/dq>0$).
    \end{prop}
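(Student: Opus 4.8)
The plan is to use the product formula~(\ref{equT}) for $\Theta^*$ evaluated at $x=5i$ and differentiate $|\Theta^*(q,5i)|^2$ logarithmically with respect to $q$. Since $|\Theta^*|^2$ is a product of the moduli squared of the three families of factors $(1-q^m)$, $(1+q^m\cdot 5i)$ and $(1+q^m/(5i))$ (the latter two paired), the logarithmic derivative becomes a sum of the logarithmic derivatives of the individual factors. Writing $L(q):=\ln|\Theta^*(q,5i)|^2=\sum_m\ln|1-q^m|^2+\sum_m\ln|1+5iq^m|^2+\sum_m\ln|1+q^m/(5i)|^2$, I would compute each summand explicitly: the factor $(1-q^m)$ contributes $\ln(1-q^m)^2$, while the pair contributes $\ln\bigl((1+25q^{2m})(1+q^{2m}/25)\bigr)$ after using $|1+5iq^m|^2=1+25q^{2m}$ and $|1+q^m/(5i)|^2=1+q^{2m}/25$. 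Differentiating termwise in $q$ reduces the claim $d(|\Theta^*|)/dq>0$ to showing that $dL/dq>0$ on $[0.5,1)$.

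First I would handle the sign of each differentiated term. The factor from $(1-q^m)$ gives $\tfrac{d}{dq}\ln(1-q^m)^2=-2mq^{m-1}/(1-q^m)$, which is \emph{negative} and is the only source of negativity. The paired factor gives $\tfrac{d}{dq}\ln\bigl((1+25q^{2m})(1+q^{2m}/25)\bigr)$, which after simplification is a positive multiple of $q^{2m-1}$, hence \emph{positive}. The strategy is therefore to show that for each $m$ the positive contribution from the paired factors dominates the negative contribution from $(1-q^m)$, or — more robustly, since the negative terms are not obviously beaten term-by-term — to bound the total negative sum from above and the total positive sum from below and compare. Because $q\in[0.5,1)$, I would extract the dominant $m=1$ terms, where the $25q^2$ in the numerator of the positive factor is large, and control the tails $\sum_{m\geq M}$ of both series by geometric bounds in $q^m$.

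The key quantitative step is to verify the inequality at the worst point. The negative sum $\sum_m 2mq^{m-1}/(1-q^m)$ grows as $q\to 1^-$ like $2/(1-q)^2$, and the positive sum also grows; so the comparison is delicate near $q=1$. I would control the ratio by regrouping: use $1/(1-q^m)=\sum_{k\geq 0}q^{mk}$ to rewrite the negative series as a double sum and compare it with the positive series coefficient-by-coefficient in powers of $q$, or alternatively bound $1/(1-q^m)\leq 1/(1-q)$ crudely and then show that the positive terms, which carry a factor like $q^{2m-1}(25+1/25)/(1+25q^{2m})$ for small $m$, still win once the large constant $25$ is accounted for. The most efficient route is probably to reduce to a single-variable rational inequality in $q$ after isolating finitely many small-$m$ terms and majorizing the rest.

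The hard part will be the uniformity over the whole interval $[0.5,1)$, and in particular the passage to the limit $q\to 1^-$, where both the positive and negative sums diverge and the dominance is no longer visible term-by-term. I expect that a naive termwise comparison fails and that one must instead establish a sharp asymptotic (or a clean majorization via the substitution $1/(1-q^m)=\sum_k q^{mk}$) showing that the leading divergent parts cancel favorably and leave a positive remainder. Once the $q\to 1^-$ behavior is pinned down and the finitely many low-$m$ terms are checked by an explicit (possibly numerically assisted) bound at $q=0.5$ together with monotonicity, the proposition follows since $d(|\Theta^*|)/dq$ has the same sign as $dL/dq$.
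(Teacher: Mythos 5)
Your setup coincides exactly with the paper's first step: writing $d(|\Theta^*(q,5i)|^2)/dq=|\Theta^*|^2(S+W)$, where $S=-\sum_{m\ge1}2mq^{m-1}/(1-q^m)<0$ comes from the factors $(1-q^m)^2$ and $W=\sum_{m\ge1}2mq^{2m-1}((626/25)+2q^{2m})/(1+(626/25)q^{2m}+q^{4m})>0$ comes from the paired factors (this is (\ref{equSW}); your $(1+25q^{2m})(1+q^{2m}/25)$ is the paper's $W_m=1+(626/25)q^{2m}+q^{4m}$). But from that point on you only list candidate strategies and, as you yourself concede, none of them is carried out: the quantitative core --- a proof that $W>|S|$ uniformly on $[0.5,1)$, in particular as $q\to1^-$ where both sums diverge --- is missing. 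Moreover, the alternatives you sketch do not close it. The crude bound $1/(1-q^m)\le1/(1-q)$ gives $|S|\le 2\sum_m mq^{m-1}/(1-q)=2/(1-q)^3$, a cubic divergence, whereas $W$ diverges only quadratically, so that route is dead. A coefficient-by-coefficient comparison in powers of $q$ is also problematic, because expanding $1/W_m=1/((q^{2m}+1/25)(q^{2m}+25))$ produces coefficients of mixed sign. Finally, your one concrete asymptotic claim is off: $|S|$ grows like $(\pi^2/3)/(1-q)^2$, not $2/(1-q)^2$, and since the whole proof comes down to a competition between constants of the same order of magnitude, getting them right is not optional.

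What the paper does instead --- and what your outline lacks --- is a sum-versus-integral comparison that puts both sums on exactly the same scale $1/(q\ln^2q)$. For $S$, the function $f(x)=2xq^x/(q(1-q^x))$ is shown to be decreasing, so $|S|=\sum_{m\ge1}f(m)\le\int_0^\infty f(x)\,dx$, and expanding $\ln(1-q^x)=-\sum_j q^{jx}/j$ evaluates the integral exactly, giving $|S|\le\pi^2/(3q\ln^2q)$ (Proposition~\ref{propS}). For $W$, the summand $h$ with $W=\sum_m h(2m)$ is shown to be unimodal --- its derivative has a single zero $\zeta_0/\ln q$ (Lemma~\ref{lmxi0}) --- whence $W\ge\int_0^\infty h(x)\,dx-2h(\zeta_0/\ln q)$; the change of variables $t=|\ln q|x$ turns the integral into $\kappa_\dagger/(q\ln^2q)$ with $\kappa_\dagger=6.825\ldots$, and the correction term is at most $2r_0|\ln 0.5|/(q\ln^2q)<2.358/(q\ln^2q)$ on $[0.5,1)$, giving $W>4.46/(q\ln^2q)$ (Proposition~\ref{propW}). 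Since $4.46>\pi^2/3=3.28\ldots$, the conclusion holds uniformly, including as $q\to1^-$. To complete your proposal you need this (or an equivalent) mechanism producing matched $1/(q\ln^2q)$ bounds with explicit constants; isolating finitely many small-$m$ terms cannot suffice by itself, because near $q=1$ no finite block of terms dominates either sum.
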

    The proposition is proved in the next subsection. It implies that for
    $(q,x)\in [0.5,1)\times \mathcal{F}$,

      $$|\Theta^*(q,x)|-|G(q,x)|\geq |\Theta^*(0.5,5i)|-1/4>0~,$$
so $\theta$ has no zeros in $\mathcal{F}$ for $q\in [0.5,1)$. On the other
  hand, for $q\in (0,0.5]$, it has at most one complex conjugate pair
of zeros, see Remarks~\ref{remsspectrum}, 
and when it does, the real part of the pair is negative, see part (2) of
Theorem~\ref{tm2}. This proves Theorem~\ref{tm1}.

      \subsection{Proof of Proposition~\protect\ref{propmain}}

      We prove that $d(|\Theta^*(q,5i)|^2)/dq>0$. Making use of equality 
      (\ref{equ2factors}) with $R=5$ one sees that

      \begin{equation}\label{equWm}
        W_m:=|(1+q^m5i)(1-q^mi/5)|^2=(1+q^{2m})^2+(5-1/5)^2q^{2m}=
        1+(626/25)q^{2m}+q^{4m}~.
        \end{equation}
      It is clear that
      (see~(\ref{equT}))

      \begin{equation}\label{equSW}
        \begin{array}{ccl}d(|\Theta^*(q,5i)|^2)/dq&=&|\Theta ^*|^2(S+W)~,~~~\,
        {\rm where}\\ \\
        S&:=&\sum_{m=1}^{\infty}(d(1-q^m)^2/dq)/(1-q^m)^2\\ \\ &=&
        -\sum_{m=1}^{\infty}2mq^{m-1}/(1-q^m)~,\\ \\
        W&:=&\sum_{m=1}^{\infty}(dW_m/dq)/W_m\\ \\ 
        &=&\sum_{m=1}^{\infty}((626/25)+2q^{2m})\cdot
        2mq^{2m-1}/(1+(626/25)q^{2m}+q^{4m})~,
        \end{array}
      \end{equation}
so $S<0$ and $W>0$. We first estimate the quantity $|S|$.

\begin{prop}\label{propS}
  One has $|S|\leq \pi^2/(3q\ln^2q)$.
\end{prop}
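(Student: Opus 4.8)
The plan is to bound the positive series $|S|=\sum_{m=1}^{\infty}2mq^{m-1}/(1-q^m)$ directly, exploiting a numerical coincidence: the target $\pi^2/(3q\ln^2q)$ equals, up to the factor $1/q$, exactly $2\zeta(2)/\ln^2q$ with $\zeta(2)=\pi^2/6$. This strongly suggests that after a Lambert-type rearrangement one should compare the series term by term against $\sum_{k\geq 1}1/k^2$.

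First I would factor out $1/q$ and write $|S|=(2/q)\sum_{m=1}^{\infty}mq^{m}/(1-q^m)$. Expanding the geometric series $q^m/(1-q^m)=\sum_{k=1}^{\infty}q^{mk}$ and interchanging the order of summation (all terms being positive, the interchange is justified), I would obtain
$$\sum_{m=1}^{\infty}\frac{mq^m}{1-q^m}=\sum_{k=1}^{\infty}\sum_{m=1}^{\infty}mq^{mk}=\sum_{k=1}^{\infty}\frac{q^k}{(1-q^k)^2}~,$$
using $\sum_{m\geq 1}mx^m=x/(1-x)^2$ with $x=q^k$.

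The crux is then the pointwise estimate $q^k/(1-q^k)^2\leq 1/(k^2\ln^2q)$ for every $k\geq 1$. Setting $y:=-k\ln q>0$, so that $q^k=e^{-y}$, this is equivalent to $e^{-y}/(1-e^{-y})^2\leq 1/y^2$, i.e. (clearing the positive denominators and taking square roots of both nonnegative sides) to $ye^{-y/2}\leq 1-e^{-y}=2e^{-y/2}\sinh(y/2)$, which reduces to the elementary inequality $\sinh(y/2)\geq y/2$, valid for all $y\geq 0$. I expect this reduction --- recognizing that the whole estimate collapses to $\sinh t\geq t$ --- to be the only nontrivial point; everything else is bookkeeping.

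Summing the pointwise bound over $k$ gives $\sum_{k\geq1}q^k/(1-q^k)^2\leq (1/\ln^2q)\sum_{k\geq1}1/k^2=\pi^2/(6\ln^2q)$, and multiplying by $2/q$ yields $|S|\leq \pi^2/(3q\ln^2q)$, as claimed. Note that the argument uses nothing about the specific range $q\in[0.5,1)$: the bound holds for every $q\in(0,1)$, since $\sinh(y/2)\geq y/2$ is used for all positive $y$.
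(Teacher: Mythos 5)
Your proof is correct, and it takes a genuinely different route from the paper's. The paper bounds $|S|=\sum_{m\geq 1}2mq^{m-1}/(1-q^m)$ by an integral comparison: it introduces $f(x)=2xq^x/(q(1-q^x))$, proves via a small derivative lemma that $f$ is continuous and decreasing on $[0,\infty)$, concludes $|S|=f(1)+f(2)+\cdots\leq\int_0^{\infty}f(x)\,dx$, and then evaluates this integral exactly, by integration by parts and the expansion $\ln (1-q^x)=-\sum_{j\geq 1}q^{jx}/j$, obtaining precisely $\pi^2/(3q\ln^2q)$. You instead perform the Lambert-series rearrangement $\sum_{m\geq 1}mq^m/(1-q^m)=\sum_{k\geq 1}q^k/(1-q^k)^2$ and then use the termwise bound $q^k/(1-q^k)^2\leq 1/(k^2\ln^2q)$, which indeed reduces (via $y=-k\ln q$) to $\sinh (y/2)\geq y/2$; summing and using $\zeta (2)=\pi^2/6$ gives the same constant. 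Every step checks out: positivity of all terms justifies the interchange of summation, and taking square roots is legitimate since both sides are nonnegative. As to what each approach buys: yours is more elementary (no monotonicity lemma, no manipulation of improper integrals) and makes transparent that the constant $\pi ^2/3$ is just $2\zeta (2)$, with the termwise inequality asymptotically sharp as $q\rightarrow 1^-$; the paper's approach explains the same constant as the exact value of the comparison integral, and its device (sum of a monotone function estimated by its integral) is reused in the companion Proposition giving the lower bound $W>4.46/(q\ln ^2q)$, where a termwise comparison like yours is not available, so the paper gains methodological uniformity across the two estimates. Both arguments are valid for all $q\in (0,1)$, not only $q\in [0.5,1)$; that restriction enters only afterwards, when $|S|$ is compared with $W$.
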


\begin{proof}
  For $x\geq 0$, we define the function $f$ as
  $f(x)=\left\{ \begin{array}{ccc}2xq^x/q(1-q^x)&{\rm for}&
x>0~,\\ \\ -2/(q\ln q)&{\rm for}&x=0~.\end{array}\right.$

\begin{lm}
The function $f$ is continuous and decreasing for $x\geq 0$, 
with $\lim_{x\rightarrow \infty}f(x)=0^+$.
\end{lm}

\begin{proof}
One finds that $\lim_{x\rightarrow 0^+}f(x)=f(0)$, so $f$ is continuous at~$0$.
One has

$$f'=2q^xg/q(1-q^x)^2~,~~~\, 
{\rm where}~~~\, g:=1+x\ln q -q^x~.$$
Clearly 
$g'=(\ln q)(1-q^x)$, with $\ln q<0$ and $1-q^x\geq 0$, so $g(0)=g'(0)=0$ and 
$g'\leq 0$, with equality only for $x=0$. Therefore for $x>0$, $g(x)<0$ 
and $f'<0$.

For the denominator one notices that for $x=0$, there is the equivalence 
$(1-q^x)^2\sim ((\ln q)x)^2$. As $g''=-(\ln^2q)q^x$, one obtains 
$f'(0)=-1/q$. Using the limits 

$$\lim_{x\rightarrow \infty}xq^x=0^+~~~\, {\rm and}~~~\,
\lim_{x\rightarrow \infty}(1-q^x)=1^-~,$$
one concludes that $\lim_{x\rightarrow \infty}f=0^+$. 
\end{proof}

The lemma implies that $|S|=f(1)+f(2)+\cdots \leq I:=\int_0^{\infty}f(x)dx$.
We set $a:=-\ln q$.
Integration by parts yields

$$I=[2x\ln (1-q^x)/aq]_0^{\infty}-(2/aq)\int_0^{\infty}\ln (1-q^x)dx~.$$
The first term to the right is interpreted as the difference of the limits at
the upper and lower bounds.
Both these limits equal~$0$. The second term is interpreted as

$$-(2/aq)\lim_{\alpha\rightarrow 0^+}\lim_{\omega\rightarrow\infty}
\int_{\alpha}^{\omega}\ln (1-q^x)dx~.$$
To find this term we set

$$\ln (1-q^x)=-\sum_{j=1}^{\infty}q^{jx}/j$$
(the series converges for $x>0$), thus

$$\int_0^{\infty}\ln (1-q^x)dx=\lim_{\alpha\rightarrow 0^+}\lim_{\omega\rightarrow\infty}
\sum_{j=1}^{\infty}[q^{jx}]_{\alpha}^{\omega}/(aj^2)=
-\sum_{j=1}^{\infty}1/(aj^2)=-\pi^2/(6a)$$
and $I=2\pi^2/(6qa^2)=\pi^2/(3qa^2)$. 
Therefore $|S|\leq \pi^2/(3qa^2)=\pi^2/(3q\ln^2q)$.
\end{proof}

\begin{prop}\label{propW}
  For $q\in [0.5,1)$, one has $W>4.46/(q\ln ^2q)$.
\end{prop}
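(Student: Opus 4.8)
The plan is to follow the template of the proof of Proposition~\ref{propS}, but to produce a \emph{lower} bound for the positive sum $W$ rather than an upper bound. Writing $h(x):=(626/25+2q^{2x})\,2xq^{2x-1}/(1+(626/25)q^{2x}+q^{4x})$ for the general term of $W$, so that $W=\sum_{m\geq 1}h(m)$, I would first exploit the factorisation $W_m=(1+25q^{2m})(1+q^{2m}/25)$ behind (\ref{equWm}). It splits $W$ into two series, each of the form $\sum_{m\geq 1}2mc\,q^{2m-1}/(1+cq^{2m})$ with $c=25$ and $c=1/25$; the crucial point is that each general term equals $(\partial/\partial q)\ln(1+cq^{2x})$ at $x=m$, so the companion integral $\int_0^\infty h(x)\,dx$ is the $q$-derivative of $\int_0^\infty\ln(1+cq^{2x})\,dx$ and can be evaluated in closed form.

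Set $a:=-\ln q$. To evaluate $\int_0^\infty\ln(1+cq^{2x})\,dx$ I would substitute $w=q^{2x}$ (so that $dx=-dw/(2aw)$), which turns the integral into the standard dilogarithm integral $(1/2a)\int_0^1\ln(1+cw)/w\,dw=-{\rm Li}_2(-c)/(2a)$; this is the exact analogue of the manipulation $\ln(1-q^x)=-\sum_j q^{jx}/j$ used for $S$, with the series $\sum 1/j^2=\pi^2/6$ there replaced by the full dilogarithm here. Differentiating in $q$ and adding the two contributions gives $\int_0^\infty h(x)\,dx=(1/2qa^2)(-{\rm Li}_2(-25)-{\rm Li}_2(-1/25))$, and the inversion identity ${\rm Li}_2(-z)+{\rm Li}_2(-1/z)=-\pi^2/6-(1/2)\ln^2 z$ collapses this to a clean constant divided by $q\ln^2 q$. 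It is worth noting that $\ln 25=2\ln 5$ surfaces here, tying the estimate directly to the radius $5$ of the half-annulus $\mathcal{A}$.

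The main obstacle is to convert this integral into an honest lower bound for the \emph{sum} $W$. The summand of $|S|$ was monotone decreasing, which immediately gave $\sum f(m)\leq\int_0^\infty f$; here, by contrast, $h(x)$ first increases and then decreases (for $q$ close to $1$ its maximum sits at an index of order $1/a$), so no one-line comparison applies. My plan is to locate the peak and split the sum there: on the increasing range I use $h(m)\geq\int_{m-1}^{m}h$ and on the decreasing range $h(m)\geq\int_{m}^{m+1}h$, so that $W\geq\int_0^\infty h\,dx$ minus a single correction coming from the unit interval straddling the peak, a correction of size $O(1/a)$ and hence of lower order than the integral $O(1/a^2)$. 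The delicate part, and where I expect the real work to lie, is showing that what remains still exceeds the target uniformly on $[0.5,1)$: the margin over the quantity $\pi^2/(3q\ln^2 q)$ supplied by Proposition~\ref{propS} (which is what makes $S+W>0$) is thin, so I anticipate having to sharpen the peak-correction estimate and, near the left endpoint $q=0.5$ where $a=\ln 2$ is not small and the asymptotics are least favourable, to back up the analysis with a direct monotonicity-and-numerical check of $q\ln^2 q\cdot W$.
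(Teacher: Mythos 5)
Your machinery is sound, and that is precisely what dooms the plan: carried out honestly, it cannot reach the constant $4.46$, because the statement as given is false. Your dilogarithm evaluation is correct: with $a:=-\ln q$,
$$\int_0^{\infty}h(x)\,dx=\frac{-{\rm Li}_2(-25)-{\rm Li}_2(-1/25)}{2qa^2}
=\frac{\pi^2/12+\ln^2 5}{q\ln ^2q}=\frac{3.4127\ldots}{q\ln ^2q}~,$$
which is exactly one half of the paper's $\kappa_{\dagger}=6.8255\ldots$ (your $h$ is the paper's $h$ compressed horizontally by the factor $2$, the paper's $W$ being $\sum_m h_{\rm paper}(2m)$). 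This companion integral is already \emph{below} the target $4.46/(q\ln^2q)$, and your lower bound is the integral \emph{minus} a positive peak correction, so no sharpening of that correction can ever yield $4.46$. Worse: the same unimodality (the paper's Lemma~\ref{lmxi0}) gives the upper bound $W\leq\int_0^{\infty}h\,dx+2\max h$, with $\max h=r_0/(qa)$, $r_0=1.6998\ldots$, hence $W\leq(3.413+3.40\,a)/(q\ln^2q)$, which is $<4.46/(q\ln^2q)$ whenever $a<0.308$, i.e. for all $q\geq 0.74$. For instance at $q=0.9$ this gives $W\leq 377.5$, whereas Proposition~\ref{propW} claims $W>446.4$; in fact $W\sim(\pi^2/12+\ln^2 5)/(q\ln^2q)$ as $q\rightarrow 1^-$.

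The reason the paper nevertheless arrives at $4.46$ is a factor-of-two slip in exactly the sum-versus-integral step you were careful about. The paper's terms are samples at spacing $2$; it represents each $h_m=h_{\rm paper}(2m)$ as ``a rectangle with basis $[2(m-1),2m]$, of length $2$, and height $h_m$'' and lets the union of these rectangles cover the region under the graph of $h_{\rm paper}$. But such a rectangle has area $2h_m$, so what the covering argument actually proves is $2W\geq I_*-2h_{\rm peak}$, i.e. precisely your bound $W\geq(\pi^2/12+\ln^2 5-r_0|\ln q|)/(q\ln^2q)$, not the paper's $W\geq I_*-2h_{\rm peak}$. Thus your proposal is the corrected form of the paper's own argument (with the pleasant bonus of the closed form $\kappa_{\dagger}=\pi^2/6+2\ln^2 5$, which the paper only gives numerically). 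The consequence for Proposition~\ref{propmain} is serious: the corrected bound exceeds $|S|\leq\pi^2/(3q\ln^2q)$ only when $\pi^2/12+\ln^2 5-r_0|\ln q|>\pi^2/3$, i.e. $|\ln q|<0.073$, roughly $q>0.93$ (note that the limiting inequality $\pi^2/12+\ln^2 5>\pi^2/3$ holds precisely because $5>e^{\pi/2}$, as you sensed). On $[0.5,\,0.93]$ the margin you worried about is genuinely absent, so the fallback you anticipate --- sharper sum--integral comparisons plus direct verification of $W>|S|$ on compact ranges of $q$ --- is not a safety net but the only viable route, and the conclusion must in any case be weakened to a constant below $\pi^2/12+\ln^2 5\approx 3.41$.
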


\begin{proof}
  We remind that the quantities $W_m$ are defined in (\ref{equWm}). We set

  $$\begin{array}{l}
    V(y):=1+(626/25)y+y^2=(y+1/25)(y+25)~~~\, {\rm hence}~~~\,
    V'(y)=(626/25)+2y~~~\, {\rm and}\\ \\ 
    h(x):=((626/25)+2q^x)xq^{x-1}/(1+(626/25)q^x+q^{2x})=
    \psi (x)\phi (q^x)~,\end{array}$$
  where

  $$\left\{ \begin{array}{cclcc}
    \phi (y)&:=&V'(y)/V(y)&=&(\ln V(y))'~~~\, {\rm and}\\ \\
  \psi (x)&:=&xq^{x-1}~.&&\end{array}\right.$$
  As

$$\phi'(y)=-2(y^2+(626/25)y+(195313/625))/(V(y))^2~,$$
(the numerator has no real roots),
one has $\phi'(y)<0$ for $y>0$. The function $q^x$ (with $q\in (0,1)$)
being decreasing, the superposition $\phi (q^x)$ is continuous and
increasing for $x\geq 0$, with $\phi (q^0)=1$.

\begin{lm}\label{lmxi0}
  The derivative $h'$ has a single zero $\zeta_0/\ln (q)$,
  $\zeta_0=-2.685347089\ldots$.
\end{lm}

\begin{proof}
  Set $t:=(\ln q)x$, so $q^x=e^t$. With $V$ as above, one has
  $dh/dx=(\ln q)(dh/dt)$ and 

  $$\begin{array}{ccl}dh/dt&=&2e^tU(t)/((q\ln q)(V(e^t))^2)~,~~~\,
    {\rm where}\\ \\ 
    625U(t)&:=&625e^{3t}+7825te^{2t}+23475e^{2t}+1250te^t+196563e^t+7825t+
    7825~.\end{array}$$
  The derivative $dU/dt$, where 

  $$625dU/dt=1875e^{3t}+15650te^{2t}+54775e^{2t}+1250te^t+197813e^t+7825~,$$
  is easily shown
  to take only positive values (the minimal value of $te^t$ and $2te^{2t}$ is
  $-e^{-1}>-0.37$). As

  $$\lim_{t\rightarrow -\infty}U(t)=-\infty ~~~\, {\rm and}~~~\,
  \lim_{t\rightarrow \infty}U(t)=\infty ~,$$
  one concludes that $U$ has a unique zero $\zeta_0$.
  Numerical computation yields
  $\zeta_0=-2.685347089\ldots$.
\end{proof}

We remind that the terms of the sum $W$ (see (\ref{equSW}))
equal $h_m:=h(x)|_{x=2m}$. We define the index $m_0\in \mathbb{N}$ by the
condition

$$2m_0\leq \zeta_0/(\ln q)<2(m_0+1)~.$$
Lemma~\ref{lmxi0} implies that the sum $W$
is minorized by the
difference

$$I_*-2h(\zeta_0/(\ln q))~,~~~\, I_*:=\int_0^{\infty}h(x)dx~.$$ 
Indeed, 
for $m\leq m_0$ (resp. for $m\geq m_0+1$), 
the quantity $h_m$ can be represented as (the surface of) a rectangle 
with basis the interval $[2(m-1),2m]$ (resp. $[2m,2(m+1)]$), of length $2$,
and height $h_m$. 
For $m\leq m_0$
(resp. for $m\geq m_0+1$),
the union of these rectangles forms a figure containing the surface between
the graph of the function $h(x)$ and the abscissa-axis for
$x\in [0,2m_0]$ (resp. for $x\in [2(m_0+1),\infty)$). Not covered by these
  rectangles is the figure representing
  $\int_{2m_0}^{2(m_0+1)}h(x)dx<2h(\zeta_0/(\ln q))$.

  The change of variables $t=|\ln q|x$ transforms the integral $I_*$ into
$(1/(q\ln^2q))I_{\dagger}$, with 

$$\begin{array}{ccl}
  I_{\dagger}&:=&
  \int_0^{\infty}t\left( ((626/25)e^{-t}+2e^{-2t})/(1+(626/25)e^{-t}+e^{-2t})\right)
  dt=
  \kappa_{\dagger}~,\\ \\ 
  \kappa_{\dagger}&:=&6.82551484\ldots ~.\end{array}$$
  On the other hand

$$\begin{array}{ccl}2h(\zeta_0/(\ln q))&=&
2(1/(q|\ln q|))\left( t((626/25)e^{-t}+2e^{-2t})/(1+(626/25)e^{-t}+e^{-2t})
\left| _{t=|\zeta_0|}\right. \right) \\ \\ 
&=&2r_0\cdot (|\ln q|/(q\ln ^2q))~,~~~\, r_0:=1.699895161\ldots ~,\end{array}$$
  so for $q\in [0.5,1)$, one has

    $$\begin{array}{cc}|\ln q|\leq |\ln 0.5|=0.6931471806\ldots ~,&
      2h(\zeta_0/(\ln q))\leq 2r_0\cdot (|\ln 0.5|/(q\ln ^2q))~,\\ \\
      2r_0\cdot |\ln 0.5|<2.358&{\rm and}\end{array}$$

  $$W\geq I_*-2h(\zeta_0/(\ln q))>
  (\kappa_{\dagger}-2r_0\cdot |\ln 0.5|)/(q\ln ^2q)>
    4.46/(q\ln ^2q)~.$$
    \end{proof}
Recall that $W>0$ and $S<0$, see (\ref{equSW}). Propositions~\ref{propS}
and \ref{propW} imply that for $q\in [0.5,1)$,
  the following inequalities hold true:

  $$W>4.46/(q\ln ^2q)>\pi^2/(3q\ln^2q)\geq |S|~~
  ({\rm with}~\pi^2/3=3.28\ldots ).$$
Hence $d(|\Theta^*(q,5i)|^2>0$.

\section{Proof of Theorem~\protect\ref{tm2}\protect\label{secprtm2}}

\subsection{Proof of part (1)}

We use the following representation of $\theta$:

\begin{equation}\label{equevenodd}
  \theta (q,x)=\theta (q^4,x^2/q)+qx\theta (q^4,qx^2)~.
  \end{equation}
We set $v:=q^4$, so

\begin{equation}\label{equv}
  \theta (q,x)=\theta (v,v^{-1/4}x^2)+v^{1/4}x\theta (v,v^{1/4}x^2)~.
  \end{equation}
The function $\theta$ has a zero on the imaginary axis if and only if for
$x\in i\mathbb{R}$, the
functions $\theta (v,v^{-1/4}x^2)$ and $\theta (v,v^{1/4}x^2)$
have a common real zero. Indeed, if one sets $x:=iy$, $y\in \mathbb{R}$, then

\begin{equation}\label{equReIm}
  \begin{array}{l}
    {\rm Re}(\theta (q,x))=\psi_1(v,y):=\theta (v,-v^{-1/4}y^2)~~~\,
    {\rm and}\\ \\ 
    {\rm Im}(\theta (q,x))=\psi_2(v,y):=v^{1/4}y\theta (v,-v^{1/4}y^2)~.
  \end{array}
  \end{equation}
The positive zeros of $\psi_1(v,.)$ and $\psi_2(v,.)$ equal

\begin{equation}\label{equchimuxi}
  \chi_k:=v^{1/8}(-\xi_k^*)^{1/2}~~~\, {\rm and}~~~\,
\mu_k:=v^{-1/8}(-\xi_k^*)^{1/2}~~~\, {\rm respectively,~so}~~~\,
\chi_k=v^{1/4}\mu_k~,
\end{equation}
where $\xi_k^*$ stand for the zeros of $\theta (v,.)$.
We use the following result (see \cite[Theorem~4]{KoBSM1}): one has
$\lim_{k\rightarrow \infty}\xi_kq^k=-1$. Hence

\begin{equation}\label{equxik}
  \lim_{k\rightarrow\infty}\xi_k^*v^k=-1~,~~~\,  
\lim_{k\rightarrow\infty}\chi_k^2v^{k-1/4}=1~~~\, {\rm and}~~~\,
\lim_{k\rightarrow\infty}\mu_k^2v^{k+1/4}=1~.
\end{equation}
Thus for $k$ sufficiently large,

\begin{equation}\label{equchi}
  \chi_{k-1}<\mu_{k-1}<\chi_k<\mu_k~.
  \end{equation}
Suppose that for $v=v_0\in (0,1)$ and for $k\geq 2k_1$, the inequalities
(\ref{equchi}) hold true. Fix $k_2\geq k_1$ and start increasing $v$. When $v$
takes the value $\tilde{q}_{k_2}$, 
one has

$$\chi _{2k_2-1}=\chi_{2k_2}<
\mu_{2k_2-1}=\mu_{2k_2}~.$$
Hence
for some $v=v_{k_2}^*\in (v_0,\tilde{q}_{k_2})$, it is true that

$$\chi_{2k_2-1}<\mu_{2k_2-1}=\chi_{2k_2}<\mu_{2k_2}~.$$
i.~e. the functions $\psi_1$ and $\psi_2$ have a common real zero.

Further we use the notation $\xi_k^*(v)$
to denote the values
of the zeros $\xi_k^*$
as functions of the parameter $v$. In
particular, we write $\xi_{2k_2}^*(\tilde{q}_k)$ for
$\xi_{2k_2}^*(v)|_{v=\tilde{q}_k}$.
We also consider a sequence of indices $k_2$ and the corresponding 
double positive zeros of $\psi_1$ and $\psi_2$ obtained for
$v=\tilde{q}_{k_2}$. The sequences of these zeros tend to $e^{\pi /2}$
as $k_2\rightarrow \infty$, see part (5) of
Remarks~\ref{remsspectrum}. Indeed, it is true that 

$$\lim_{k\rightarrow\infty}|\xi_{2k}^*(\tilde{q}_{k})|=e^{\pi}~~~\, {\rm and}~~~\, 
\lim_{k\rightarrow\infty}\tilde{q}_k=1^-~,~~~\, {\rm so}~~~\, 
\lim_{k_2\rightarrow\infty}\chi_{2k_2}(\tilde{q}_{k_2})=e^{\pi /2}~~
    {\rm (see~(\ref{equchimuxi})).}$$
    Similarly $\lim_{k_2\rightarrow\infty}\mu_{2k_2}(\tilde{q}_{k_2})=e^{\pi /2}$.
    One has (see (\ref{equstring}))

    $$|\xi_{2k_2}^*(v^*_{k_2})|>(v_{k_2}^*)^{-2k_2+1}>(\tilde{q}_{k_2})^{-2k_2+1}~.$$
The right-hand side tends to $e^{\pi}$ as $k_2\rightarrow\infty$. This can be
deduced from (\ref{equstring}) and (\ref{equasympt}),
see~Remarks~\ref{remsspectrum}. Hence the sequence of numbers
$|\xi_{2k_2}^*(v^*_{k_2})|$ has an accumulation point which is $\geq e^{\pi}$
(we admit the possibility this point to be~$\infty$). 

Recall that $\chi_k=v^{1/8}(-\xi_k^*)^{1/2}$. We show below that one can choose
the sequence of indices $k_2$ such that $\lim_{k_2\rightarrow\infty}v^*_{k_2}=1^-$. 
Hence there exists an accumulation point of the sequence of
points $\chi_{2k_2}(v^*_{k_2})$
(i.~e. of the common zeros of $\psi_1$ and $\psi_2$)
which is $\geq e^{\pi /2}$.  

Now about the choice of the sequence of indices $k_2$. We denote it by
$k_2^1$, $k_2^2$, $\ldots$. When $k_2^1$ is chosen, then one first
increases $v$ until the value $\tilde{q}_{k_2^1}$ and then chooses the index
$k_2^2$ such that for $v=\tilde{q}_{k_2^1}$ and for $k=2k_2^2$, the inequalities
(\ref{equchi}) hold true. Thus $v_{k_2^2}^*>\tilde{q}_{k_2^1}>v_{k_2^1}^*$,
the sequence
$v_{k_2^s}^*$ is minorized by the sequence $\tilde{q}_{k_2^{s-1}}$ and the latter
tends to~$1^-$. This proves part~(1) of Theorem~\ref{tm2}, with
$q_s=(v_{k_2^s}^*)^{1/4}$.

\subsection{Proof of part (2) of Theorem~\ref{tm2}}

We need the following lemma:

\begin{lm}\label{lm02}
  With the notation of Remarks~\ref{remsspectrum} and 
  for $q\in (0,0.2]$, one has

$$\xi_{2k+1}\in (-q^{-2k-1.2},-q^{-2k-1})~~~\, 
{\rm and}~~~\, \xi_{2k+2}\in (-q^{-2k-2},-q^{-2k-1.8})~,~~~\, k=0,~1,~\ldots ~.$$
\end{lm}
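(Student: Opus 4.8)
The plan is to pin down each zero by reading off the sign of $\theta(q,\cdot)$ at the endpoints of the asserted sub-intervals. Since $q\le 0.2<\tilde q_1=0.309\ldots$, all the $\xi_j$ are real and negative, so the inequalities (\ref{equstring}) and the sign pattern recorded in part~(3) of Remarks~\ref{remsspectrum} are available. Applying (\ref{equstring}) with index $k+1$ immediately yields the two ``outer'' bounds $\xi_{2k+1}<-q^{-2k-1}$ and $\xi_{2k+2}>-q^{-2k-2}$. For the two ``inner'' bounds I note that $\theta(q,\cdot)$ is negative \emph{exactly} on $(\xi_{2k+2},\xi_{2k+1})$ and positive on the two neighbouring intervals, while as $s$ runs through $(2k+1,2k+2)$ the point $-q^{-s}$ sweeps $(-q^{-2k-2},-q^{-2k-1})$, which by (\ref{equstring}) contains precisely the consecutive zeros $\xi_{2k+2}<\xi_{2k+1}$. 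Hence $\xi_{2k+1}>-q^{-2k-1.2}$ and $\xi_{2k+2}<-q^{-2k-1.8}$ are equivalent to the single sign statements $\theta(q,-q^{-2k-1.2})<0$ and $\theta(q,-q^{-2k-1.8})<0$ for $k=0,1,\dots$, because the first one forces $-q^{-2k-1.2}\in(\xi_{2k+2},\xi_{2k+1})$ and the second forces $-q^{-2k-1.8}\in(\xi_{2k+2},\xi_{2k+1})$.

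To establish these signs I would set $g(s):=\theta(q,-q^{-s})$, so that the functional equation (\ref{equqqx}) becomes $g(s)=1-q^{-(s-1)}g(s-1)$. Iterating it twice lowers the integer part of $s$ by $2$ while preserving its fractional part, giving
$$g(2k+1.2)=1-q^{-(2k+0.2)}+q^{-(4k-0.6)}\,g(2(k-1)+1.2)$$
and, in the same way, $g(2k+1.8)=1-q^{-(2k+0.8)}+q^{-(4k+0.6)}\,g(2(k-1)+1.8)$. I would then induct on $k$. In the inductive step ($k\ge 1$) the constant part $1-q^{-(2k+0.2)}$ (resp. $1-q^{-(2k+0.8)}$) is negative since $q^{-(2k+0.2)}>1$, the coefficient $q^{-(4k-0.6)}$ (resp. $q^{-(4k+0.6)}$) is positive, and the trailing factor is negative by the induction hypothesis; thus $g<0$. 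Remarkably this step uses only $q\in(0,1)$ and the \emph{sign} of $g$ two steps earlier, so no quantitative control is propagated.

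Everything therefore reduces to the two base cases $g(1.2)<0$ and $g(1.8)<0$, and these I expect to be the main obstacle: this is the only place where the hypothesis $q\le 0.2$ enters, and it is essentially sharp, since for $q$ slightly above $0.2$ one already has $\theta(q,-q^{-1.2})>0$. Here I would expand $g(1.2)=1-q^{-0.2}+q^{0.6}-q^{2.4}+\cdots$ and $g(1.8)=1-q^{-0.8}+q^{-0.6}-q^{0.6}+\cdots$ as alternating series whose moduli decrease from the second term onward, so that an even-order partial sum $S_{2m}$ bounds $g$ from above. The margin for $g(1.2)$ is thin (about $-0.02$ at $q=0.2$), so I would keep several terms, e.g. bound $g(1.2)\le 1-q^{-0.2}+q^{0.6}-q^{2.4}+q^{5.2}$, and check that this elementary function stays negative on $(0,0.2]$; a short monotonicity argument in $q$ (the dominant term $-q^{-0.2}$ being increasing) reduces this to the single endpoint $q=0.2$. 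The estimate for $g(1.8)$ is analogous but with a comfortable margin.
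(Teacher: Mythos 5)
Your proposal is correct and follows essentially the same route as the paper's own proof: the same translation of the two inner bounds into the sign conditions $\theta (q,-q^{-2k-1.2})<0$ and $\theta (q,-q^{-2k-1.8})<0$ via (\ref{equstring}) and the sign pattern of part (3) of Remarks~\ref{remsspectrum}, the same induction on $k$ obtained by iterating the functional equation (\ref{equqqx}) twice (with the same recursions and the observation that the step needs only the sign of the previous term), and the same base case $k=0$ handled by checking negativity at $q=0.2$ and monotonicity in $q$ on $(0,0.2]$. The only difference is cosmetic: you majorize the Leibniz tails by even-order partial sums, whereas the paper evaluates the full sums $A$ and $B$ numerically at $q=0.2$.
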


\begin{proof}
  For $k=0$, we consider the functions $\tau_1(q):=\theta (q,-q^{-1.2})$ and
  $\tau_2(q):=\theta (q,-q^{-1.8})$:

  $$\begin{array}{ll}
    \tau_1=1-q^{-0.2}+A~,&A:=q^{0.6}-q^{2.4}+q^{5.2}-q^{9}+\cdots ~,\\ \\
\tau_2=1-q^{-0.8}+B~,&B:=q^{-0.6}-q^{0.6}+q^{2.8}-q^{6}+\cdots ~.\end{array}
    $$
For $q=0.2$, the sums of the Leibniz series $A$ and $B$ equal
$0.3599499830\ldots$ and $2.256770928\ldots$ respectively and one finds that 

$$\tau_1(0.2)=-0.0197796780\ldots <0~~~\, {\rm and}~~~\,
\tau_2(0.2)=-0.367127390\ldots <0~.$$ 
It is easy to show that both functions $\tau_1$ and $\tau_2$ are increasing for
$q\in (0,0.2]$, so one has 

  $$\xi_1\in (-q^{-1.2},-q^{-1})~~~\, {\rm and}~~~\, 
  \xi_2\in (-q^{-2},-q^{-1.8})~,$$
  see 2. and 3. of part (3) of Remarks~\ref{remsspectrum} and the inequalities
  (\ref{equstring}).
  Recall that (see~(\ref{equqqx}))

  $$\theta (q,x)=1+qx\theta(q,qx)=1+qx+q^3x^2\theta (q,q^2x)~.$$
  Then

  $$\begin{array}{ll}
    \theta (q,-q^{-2k-1.2})=1-q^{-2k-0.2}+q^{0.6-4k}\theta (q,-q^{-2(k-1)-1.2})&
           {\rm and}\\ \\
           \theta (q,-q^{-2k-1.8})=1-q^{-2k-0.8}+q^{-0.6-4k}\theta (q,-q^{-2(k-1)-1.8})~,&
           \end{array}$$
  so if for $q\in (0,0.2]$,

$$\theta (q,-q^{-2(k-1)-1.2})<0~~~\, {\rm and}~~~\, 
\theta (q,-q^{-2(k-1)-1.8})<0~,$$
then
$\theta (q,-q^{-2k-1.2})<0$ and $\theta (q,-q^{-2k-1.8})<0$. 
Thus by induction on $k$ and using (\ref{equstring}) the lemma is proved. 
\end{proof}

Next we use equations~(\ref{equevenodd}), (\ref{equv}) and~(\ref{equReIm})
and the corresponding notation.
The zeros of $\theta$ depend continuously on the parameter~$q$ hence on~$v$.
For $q\in (0,\tilde{q}_1]$, all zeros are real negative.
  If for some value $v_{\dagger}\in (0,0.2]$ of $v$,
  the function $\theta$ has complex 
conjugate zeros
with positive real part, then for some $v=v_1\in (0,v_{\dagger})$, it has a pair
of purely imaginary zeros and the functions $\psi_1(v_1,.)$ and
$\psi_2(v_1,.)$ have a common zero.

However for $v\in (0,0.2]$, such a common zero is impossible.
    Indeed, if one denotes as in the previous subsection 
by $\pm \chi_{j}$ and $\pm \mu_{j}$ the real zeros of
the functions $\psi_1(v,.)$ and $\psi_2(v,.)$, where 
$0<\chi_{j}<\chi_{j+1}$ and $0<\mu_{j}<\mu_{j+1}$, and by $-\xi_j^*$
the zeros of $\theta (v,.)$, then one has~(\ref{equchimuxi}).

One cannot have $\chi_{j}=\mu_{s}$ for $s\geq j$, because $v\in (0,1)$
and $\mu_{s}\geq \mu_{j}=v^{-1/4}\chi_{j}>\chi_{j}$.
This is also impossible for $s\leq j-3$,
because $\mu_s\leq \mu_{j-3}$ and by (\ref{equstring}) one has

$$\mu_{j-3}=v^{-1/8}\sqrt{-\xi_{j-3}^*}<v^{-1/8-(j-2)/2}=v^{-j/2+7/8}<
v^{-j/2+5/8}=v^{1/8-(j-1)/2}<v^{1/8}\sqrt{-\xi_{j}^*}=\chi_j~.$$
For $s=j-2$, if $j$ is odd, then (see~(\ref{equstring}))

$$\mu_{j-2}=v^{-1/8}\sqrt{-\xi_{j-2}^*}<v^{-1/8-(j-1)/2}<v^{1/8-j/2}<
v^{1/8}\sqrt{-\xi_{j}^*}=\chi_{j}~.$$
If $j$ is even, then

$$\mu_{j-2}=v^{-1/8}\sqrt{-\xi_{j-2}^*}<v^{-1/8-(j-2)/2}<v^{1/8-(j-1)/2}<
v^{1/8}\sqrt{-\xi_{j}^*}=\chi_{j}~.$$
In the remaining case $s=j-1$ we use the condition $v\in (0,0.2]$ and
    Lemma~\ref{lm02}. If $j$ is odd, then

    $$\mu_{j-1}=v^{-1/8}\sqrt{-\xi_{j-1}^*}<v^{-1/8-(j-1)/2}<v^{1/8-j/2}<
    v^{1/8}\sqrt{-\xi_{j}^*}=\chi_{j}~.$$
    If $j$ is even, then

    $$\mu_{j-1}=v^{-1/8}\sqrt{-\xi_{j-1}^*}<v^{-1/8-(j-1+0.2)/2}<v^{1/8-(j-0.2)/2}<
    v^{1/8}\sqrt{-\xi_{j}^*}=\chi_{j}~.$$
    Thus for $v\in (0,0.2]$, i.e., for
      $q\in (0,0.2^{1/4}=0.6687403050\ldots ]$, there is no
      purely imaginary zero of
      $\theta$. The zeros of $\theta$ depending continuously on~$q$ and
      being all real negative for
      $q\in(0,\tilde{q}_1)$, for
      $q\in (0,0.2^{1/4}]$, they are either real negative or
        complex with negative real part.

        \begin{rem}\label{remnum}
          {\rm For $q=0.726$, $0.727$ and $\tilde{q}_1^{1/4}=0.7457222066\ldots$,
            the truncation $\theta_{100}$ of $\theta$
            (see part (6) of Remarks~\ref{remsKats}) has the following
            complex conjugate pairs of zeros respectively (they belong to the
            interior of Katsnelson's contour):}

          $$\begin{array}{cc}-0.004522146605\ldots \pm 2.911439535\ldots i~,&
            0.005050176876\ldots \pm 2.904960208\ldots i\\ \\
            {\rm and}&0.1780767569\ldots \pm 2.779382065\ldots i~.
          \end{array}$$
          {\rm These are the zeros closest to the imaginary axis. This makes
            one suppose that Theorem~\ref{tm2} could hold true with
            $0.726$ instead of $0.2^{1/4}$ in its formulation.}
          \end{rem}

        \section{Proof of Theorem~\protect\ref{tm3}\protect\label{secprtm3}}

        \subsection{Beginning of the proof of Theorem~\protect\ref{tm3}}

We remind that $y_s$ denotes the double zero of
$\theta (\tilde{q}_s,.)$, see Section~\ref{seczerosspectrum}. 
By \cite[Theorem~5]{KoSe}, for $s\geq 15$, one has $y_s>-38.9$. Any real
zero of $\theta$ is $<-5$ (see \cite[Proposition~2]{KoMatStud}). Hence
for $s\geq 15$, all
double zeros of $\theta$ belong to the interval $(-38.9,-5)$.

  Suppose that $q\in [\tilde{q}_s,\tilde{q}_{s+1})$. Then:
    \vspace{1mm}
    
    1) for
  $x\in [-q^{-2s-1},-q^{-2s+1}]$, one has $\theta (q,x)\geq 0$, with equality
    only for $(q,x)=(\tilde{q}_s,y_s)$; 
    \vspace{1mm}

     2) there exists $x_{\dagger}\in (-q^{-2s-2},-q^{-2s-1})$ such that
    $\theta (q,x_{\dagger})<0$;
    \vspace{1mm}
    
    3) this means that $x_{\dagger}/q\in (-q^{-2s-3},-q^{-2s-2})$ and
    $\theta (q,x_{\dagger}/q)=1+qx_{\dagger}\theta (q,x_{\dagger})>1$,
    see~(\ref{equqqx}).

\begin{lm}\label{lmx0}
  Suppose that for $q\in (0,1)$ fixed and for $x=x_0<-5$, one has
  $|\theta (q,x_0)|\geq 1$. Then one has $\theta (q,z)\neq 0$ for
  $|z|=|x_0|$.
\end{lm}

The lemma is proved in Subsection~\ref{subsecprlmx0}. It
implies that for $q\in [\tilde{q}_s,\tilde{q}_{s+1})$, no complex
  zero of
  $\theta$ crosses the circumference $\mathcal{C}(0,q^{-2s-3})$
  centered at $0$ and of radius $q^{-2s-3}$. As $q$ grows from $\tilde{q}_s$ to
  $\tilde{q}_{s+1}$, the quantity $q^{-2s-3}$ decreases.
  Therefore to find a disk
  centered at $0$ and containing all complex zeros of $\theta$ it suffices
  to find a majoration of the quantities $\tilde{q}_s^{-2s-3}$.

  From the fourth to the 25th spectral value (see part (2) of
  Remarks~\ref{remsspectrum}), the corresponding quantities
  $\tilde{q}_s^{-2s-3}$ are $<49.6$ (this can be checked directly).
  For $s\geq 26$, one knows that

  $$|y_s|<38.9~,~~~\, y_s\in (-q^{-2s},-q^{-2s+1})~~~\, {\rm and}~~~\, 
  \tilde{q}_s\in (\tilde{q}_{25},1)~,~~~\, {\rm with}~~~\,
  \tilde{q}_{25}=0.940393\ldots ~.$$
  That's why for $s\geq 26$,

  $$\tilde{q}_s^{-2s-3}=\tilde{q}_s^{-2s+1}\cdot \tilde{q}_s^{-4}<
  |y_s|\cdot \tilde{q}_s^{-4}<
  38.9\cdot (\tilde{q}_{25})^{-4}<
  38.9\cdot 0.940393^{-4}=49.74\ldots <49.8~.$$
  Thus the inequality $\tilde{q}_s^{-2s-3}<49.8$ is proved for $s\geq 4$. The
  quantities $q^{-2s-3}$ being decreasing, we have proved the following lemma:

  \begin{lm}\label{lms}
    For $s\geq 4$ and $q\in [\tilde{q}_s,\tilde{q}_{s+1})$, no complex zero
      of $\theta$ belongs to $\mathcal{C}(0,q^{-2s-3})$.
  \end{lm}
  
  In particular, this
  means that the complex conjugate pairs born from the double zeros $y_4$,
  $y_5$, $y_6$, $\ldots$ lie inside the circumference $\mathcal{C}(0,49.8)$.

\subsection{Completion of the proof of Theorem~\protect\ref{tm3}}

For the remaining three conjugate pairs of zeros (born from the double
zeros $y_1$, $y_2$ and $y_3$) we modify our method by
  considering not only integer, but also half-integer negative powers of~$q$.

  \begin{nota}
    {\rm We denote by $c_j$ the complex conjugate pair born from the
      double zero $y_j$ for $q=\tilde{q}_j$, and by $c_j^+$ (resp. $c_j^-$)
      the zero of this pair with positive (resp. negative) imaginary part.}
  \end{nota}

  {\em The pair $c_3$.} One finds numerically that $q^{-15/2}<49.8$ if and
  only if $q>0.59\ldots$ and that $\theta (q,-q^{-15/2})<-1$ if
  $q<0.66393\ldots$.
  The pair $c_3$ is born for $q=\tilde{q}_3=0.630628\ldots$. Hence for
  $q\in (\tilde{q}_3,0.663]=:\mathcal{I}$,
    the pair $c_3$ is inside the circumference
  $\mathcal{C}(0,q^{-15/2})$ which in turn is inside the circumference
  $\mathcal{C}(0,49.8)$.

  One has $q^{-19/2}<49.8$ if and only if $q>0.66274\ldots$; it is true that
  $\theta (q,-q^{-19/2})<-1$ if $q<0.72344\ldots$. Thus for
  $q\in [0.6628,0.72]=:\mathcal{J}$,
  the pair $c_3$ is inside 
  $\mathcal{C}(0,q^{-19/2})$ which is inside 
  $\mathcal{C}(0,49.8)$.

  One has $q^{-21/2}<49.8$ if and only if $q>0.6892\ldots$, and
  $\theta (q,-q^{-21/2})>1$ if $q<0.7465\ldots$. Therefore 
  for $q\in [0.6893,0.7465]=:\mathcal{K}$, the pair $c_3$ is inside
  $\mathcal{C}(0,q^{-21/2})$ which is inside 
  $\mathcal{C}(0,49.8)$. One notices that
  $\mathcal{I}\cap \mathcal{J}\neq \emptyset \neq \mathcal{J}\cap \mathcal{K}$. 

  But the circumference $\mathcal{C}(0,q^{-21/2})$ is inside
  $\mathcal{C}(0,q^{-11})$ which is inside $\mathcal{C}(0,49.8)$ for
  $q\in [\tilde{q}_4,\tilde{q}_5)$ and $\tilde{q}_4=0.7012\ldots <0.7465$,
    see Lemma~\ref{lms} with $s=4$; the lemma implies that the pair $c_3$ is
    inside $\mathcal{C}(0,q^{-2s-3})$ for $q\in [\tilde{q}_s,\tilde{q}_{s+1})$,
      $s\geq 4$. Thus for $q\in [\tilde{q}_4,1)$ (hence
      for $q\in (\tilde{q}_3,1)$), the pair $c_3$ is inside
      $\mathcal{C}(0,49.8)$.
      \vspace{1mm}

{\em The pair $c_2$.} This pair is born for
      $q=\tilde{q}_2=0.516959\ldots$. The conditions $q^{-11/2}<49.8$ and
      $\theta (q,-q^{-11/2})<-1$ are fulfilled for $q>0.4913\ldots$ and
      $q<0.5721\ldots$ respectively. Hence for
      $q\in (\tilde{q}_2,0.5721]\subset [0.4914,0.5721]$,
      the pair $c_2$ is inside 
  $\mathcal{C}(0,q^{-11/2})$ which is inside 
      $\mathcal{C}(0,49.8)$.

      Next, the inequalities $q^{-13/2}<49.8$ and
      $\theta (q,-q^{-13/2})>1$ hold true for $q>0.5481\ldots$ and
      $q<0.6249\ldots$ respectively, so for $q\in [0.5482,0.6249]$, the pair
      $c_2$ is inside 
  $\mathcal{C}(0,q^{-13/2})$ which is inside 
      $\mathcal{C}(0,49.8)$.

In the same way as for the pair $c_3$ one shows that for
      $q\in [0.6,0.663]$, the pair $c_2$ is inside $\mathcal{C}(0,q^{-15/2})$
      and for $q\in [0.6628,0.72]$, it is inside $\mathcal{C}(0,q^{-19/2})$.
      In both cases these circumferences are inside $\mathcal{C}(0,49.8)$.
      And then one repeats the last two paragraphs of the proof
      that the pair $c_3$
      is inside $\mathcal{C}(0,49.8)$ for $q\in (\tilde{q}_3,1)$.
      \vspace{1mm}

      {\em The pair $c_1$.} We set $\rho_0:=2^{11/2}=45.2548\ldots$.
      We show first that
      for $q\in (\tilde{q}_1,1/2]$, one has $|c_1^{\pm}|<\rho _0$. Suppose that
      for some $q\in (\tilde{q}_1,1/2]$, one has $|c_1^{\pm}|\geq \rho _0$.
    Recall that (see~\cite{KoDBAN14})

    $$\theta (q,x)=\prod_{j=1}^{\infty}(1-x/\xi_j)=\sum_{j=0}^{\infty}q^{j(j+1)/2}x^j~,$$
    where $\xi_j$ are
    the zeros of $\theta (q,.)$ counted with multiplicity. Then the coefficient
    of $x^2$ equals
    
 $$r_2:=q^3=\sum_{1\leq k<m}1/\xi_k\xi_m=:s_2~.$$
For $q\in (\tilde{q}_1,1/2]$, one obtains
  $\tilde{q}_1^3=0.02957\ldots \leq r_2\leq 1/8=0.125$. We remind that
for the real zeros of $\theta$ the following inequalities hold true
(see~(\ref{equstring})):

  $$-q^{2j}<\xi_{2j}<\xi_{2j-1}<-q^{2j-1}~.$$
  In the sum $s_2$ we set $\xi_1:=c_1^+$ and $\xi_2:=c_1^-$, so

  \begin{equation}\label{equs2}
    s_2=1/|c_1^+|^2+((1/c_1^++1/c_1^-)\sum_{k=3}^{\infty}1/\xi_k)+
  \sum_{3\leq k<m}1/\xi_k\xi_m~.\end{equation}
  One obtains a majoration of the rightmost sum by setting
  $\xi_{2j}=\xi_{2j-1}=-q^{2j-1}$, $j=2$, $3$, $\ldots$. In this case the sum
  equals

  $$\begin{array}{ccl}
    \phi (q)&:=&\sum_{j=1}^{\infty}q^{2j+1}(q^{2j+1}+2q^{2j+3}+2q^{2j+5}+\cdots )\\ \\
    &=&-\sum_{j=1}^{\infty}(q^{2j+1})^2+
    2\sum_{j=1}^{\infty}q^{2j+1}(q^{2j+1}+q^{2j+3}+q^{2j+5}+\cdots)\\ \\
    &=&-q^6/(1-q^4)+2\sum_{j=1}^{\infty}q^{4j+2}/(1-q^2)=
    q^6(1+q^2)/((1-q^2)(1-q^4))~.
  \end{array}$$
  The function $\phi$ is increasing on the interval $[0.3,0.5]$, with
  $\phi (0.3)=0.0008803\ldots$ and $\phi (0.5)=0.0277\ldots$.

  With regard to (\ref{equs2}), for $|c_1^{\pm}|\geq \rho_0$,
  one obtains $1/|c_1^+|^2\leq 1/\rho_0^2=0.000488\ldots$.

  The
  product $(1/c_1^++1/c_1^-)\sum_{k=3}^{\infty}1/\xi_k$ is maximal for
  $c_1^+=c_1^-=-\rho_0$ (so $1/c_1^++1/c_1^-=2^{-9/2}$)
  and $\xi_{2j}=\xi_{2j-1}=-q^{2j-1}$, $j=2$, $3$, $\ldots$,
  in which case it equals

  $$\delta (q):=2^{-9/2}(2q^3/(1-q^2))~.$$
  The difference $q^3-\phi (q)-1/\rho_0^2-\delta (q)$ is positive on
  $[0.3,0.5]$; its minimal value $0.0230\ldots$ is attained for $q=0.3$. 
  Hence the equality $r_2=s_2$ is impossible for $|c_1^{\pm}|\geq \rho_0$.

Thus for $q\in (\tilde{q}_1,1/2]$, the pair $c_1$ is inside the circumference
  $\mathcal{C}(0,q^{-11/2})$. In the same way as for the pair $c_2$ one shows
  that for $q\in [0.4914, 0.5721]$, the pair $c_1$ is inside
  $\mathcal{C}(0,q^{-11/2})$ and
  for $q\in [0.5482,0.6249]$, it is inside $\mathcal{C}(0,q^{-13/2})$;
  as this was shown for the pair $c_3$ one proves that the pair $c_1$ is inside
  $\mathcal{C}(0,q^{-15/2})$ for $q\in [0.6,0.663]$ and inside
  $\mathcal{C}(0,q^{-19/2})$ for $q\in [0.6628,0.72]$; hence in all these cases
  the pair $c_1$ is 
  inside $\mathcal{C}(0,49.8)$. 
  Then by Lemma~\ref{lms} this pair is inside
  $\mathcal{C}(0,49.8)$ for $q\in (\tilde{q}_1,1)$.

\subsection{Proof of Lemma~\protect\ref{lmx0}\protect\label{subsecprlmx0}}

  We remind that for $m\in \mathbb{N}^*$, one has
  $\theta (q,-q^{-m})\in (0,q^m)$ (see \cite[Proposition~9]{KoBSM1}),
  so $|\theta (q,-q^{-m})|<1$. Hence if
  $|\theta (q,x)|\geq 1$, then $x\neq -q^{-m}$. We use the Jacobi triple
  product, see the definition of the function $\Theta^*$
  in Subsection~\ref{subsecmethod}. Consider the
  product

  $$T_m(q,z):=(1+q^mz)(1+q^m/z)=q^m(q^{-m}+z)(1/z)(z+q^m)$$
  for $|z|=|x_0|$. The modulus $|q^{-m}+z|$ is minimal for $z=x_0$. Indeed,
  we saw that $x_0\neq -q^{-m}$. Consider
  the circumference $\mathcal{C}_1:=\mathcal{C}(0,|x_0|)$. 
  
  If the point $-q^{-m}$ is inside $\mathcal{C}_1$ (i.~e. $|x_0|>q^{-m}$),
  then the circumferences
  $\mathcal{C}_1$ and $\mathcal{C}_2:=\mathcal{C}(-q^{-m},|x_0|-q^{-m})$
  are tangent at $x_0$ and $\mathcal{C}_2$ is inside $\mathcal{C}_1$. Hence
  for $|z|=|x_0|$ and $z\neq x_0$, the point $z$ is outside $\mathcal{C}_2$, so

  $$|z+q^{-m}|>|x_0+q^{-m}|~.$$
  The circumference
  $\mathcal{C}_3:=\mathcal{C}(-q^{m},|x_0|-q^{m})$ is also inside $\mathcal{C}_1$
  and tangent to it at $x_0$, therefore

  $$|z+q^{m}|>|x_0+q^{m}|~~~\, {\rm and}~~~\, 
  |T_m(q,z)|>|T_m(q,x_0)|~.$$

  If the point $-q^{-m}$ is outside $\mathcal{C}_1$ (i.~e. $|x_0|<q^{-m}$),
  then the inequality
  $|z+q^{m}|>|x_0+q^{m}|$ is proved in the same way. To show that
  $|z+q^{-m}|>|x_0+q^{-m}|$, we consider the tangent line $\mathcal{L}$
  to $\mathcal{C}_1$
  at $x_0$ and the point $z'\in \mathcal{L}$ such that Im$\, z'=$Im$\, z$. Then

  $$|z+q^{-m}|>|z'+q^{-m}|>|x_0+q^{-m}|$$
  and again $|T_m(q,z)|>|T_m(q,x_0)|$. This means that
  $|\Theta^*(q,z)|>|\Theta^*(q,x_0)|$.

  Recall that $\theta =\Theta^*-G$. For $x<-5$, the series of $G$
  is a Leibniz series with a negative first term, so $0<-G<1/5$. This is why the
  inequality $\theta (q,x_0)\leq -1$ implies $\Theta^*(q,x_0)<-1$ hence
  $|\Theta^*(q,z)|>1$ for $|z|=|x_0|$. As $|G(q,z)|<\sum_{j=1}^{\infty}1/5^j=1/4$,
  one obtains
  $|\theta (q,z)|>3/4$ and $\theta (q,.)$ has no zero on~$\mathcal{C}_1$.

  If $\theta (q,x_0)\geq 1$, then $\Theta^*(q,x_0)>4/5$, so
  $|\Theta^*(q,z)|>4/5$ and $|\theta (q,z)|>4/5-1/4=11/20$ for $|z|=|x_0|$.


\begin{thebibliography}{40}
\bibitem{AnBe} G. E. Andrews, B. C. Berndt,  
Ramanujan's lost notebook. Part II. Springer, NY, 2009.
\bibitem{BeKi} B. C. Berndt, B. Kim, 
Asymptotic expansions of certain partial theta functions. 
Proc. Amer. Math. Soc. 139:11 (2011), 3779-3788.
\bibitem{BFM} K.~Bringmann, A.~Folsom and A.~Milas, 
Asymptotic behavior 
of partial and false theta functions arising from Jacobi forms and regularized 
characters. 
J. Math. Phys. 58:1 (2017), 011702, 19 pp.
\bibitem{BrFoRh} K. Bringmann, A. Folsom, R. C. Rhoades, 
Partial theta functions 
and mock modular forms as $q$-hypergeometric series, 
Ramanujan J. 29:1-3 (2012), 
295-310, 
http://arxiv.org/abs/1109.6560
\bibitem{CMW} T.~Creutzig, A.~Milas and S.~Wood, 
On regularised quantum 
dimensions of the singlet vertex operator algebra and false theta functions. 
Int. Math. Res. Not. 5 (2017), 1390-1432.
\bibitem{FGM} R. Flores and J. Gonz\'alez-Meneses,
  On the growth of Artin-Tits monoids and the partial theta function. 
J. Combin. Theory Ser. A 190 (2022), Paper No. 105623, 39 pp.
\bibitem{Ha} G.~H.~Hardy, 
On the zeros of a class of integral functions, 
Messenger of Mathematics, 34 (1904), 97-101.
\bibitem{Hu} J. I. Hutchinson, 
On a remarkable class of entire functions, 
Trans. Amer. Math. Soc. 25 (1923), 325-332.
\bibitem{KaLoVi} O.M. Katkova, T. Lobova and A.M. Vishnyakova, 
On power series 
having sections with only real zeros. 
Comput. Methods Funct. Theory 3:2 (2003), 
425-441.
\bibitem{Ka} B. Katsnelson, On summation of the Taylor series 
of the function 
$1/(1-z)$ by the theta summation method,
Complex analysis and dynamical systems VI. Part 2, 141–157.
Contemp. Math., 667
Israel Math. Conf. Proc.
American Mathematical Society, Providence, RI, 2016
arXiv:1402.4629v1 [math.CA]. 
\bibitem{KoFAA} V.P. Kostov, 
On the multiple zeros of a partial theta function, 
Funct. Anal. Appl. 
(Russian version 50, No. 2 (2016) 84-88, 
English version 50, No. 2 (2016) 153-156).
\bibitem{KoDBAN14} V.P. Kostov, A property of a partial theta function,
  Comptes Rendus Acad. Sci. 
Bulgare 67, No. 10 (2014) 1319-1326.
\bibitem{KoFAA19} V.P. Kostov, On the complex conjugate zeros of the partial
  theta function, Funct. 
  Anal. Appl. (English version 2019, 53:2, 149-152,
  Russian version 2019, 53:2, 87-91).
\bibitem{KoBSM1} V.P. Kostov, 
On the zeros of a partial theta function, 
Bull. 
Sci. Math. 137:8 (2013), 1018-1030.
\bibitem{KoBSM2}  V.P. Kostov, On the double zeros of a partial theta function, 
Bull. Sci. Math. 140, No. 4 (2016) 98-111.
\bibitem{KoPRSE2}V.P. Kostov,  
On a partial theta function and its spectrum, 
Proc. Royal Soc. Edinb. A 146:3 (2016), 609-623.
\bibitem{KoPMD} V.P. Kostov, 
A domain containing all zeros of the partial 
theta function, 
Publ. Math. Debrecen 93:1-2 (2018), 189-203.
\bibitem{KoAA} V.P. Kostov, 
A separation in modulus property of the 
zeros of a partial theta function, 
Analysis Mathematica 44:4 (2018), 
501-519.
\bibitem{KoSe}V.P. Kostov, On the zero set of the partial theta function,
  Serdica Math. J. 45 (2019), 225-258.
\bibitem{KoMatStud} V.P. Kostov, A domain free of the zeros of the
  partial theta function.
  Mat. Stud. 58 (2022), no. 2, 142-158.
\bibitem{KoAnn24} V.P. Kostov, No zeros of the partial theta function in the
  unit disk, Annual of Sofia University ``St. Kliment Ohridski'', Faculty
  of Mathematics and Informatics 111 (2024) 129-137.
  DOI: 10.60063/gsu.fmi.111.129-137
\bibitem{KoSh} V.P. Kostov and B. Shapiro, 
Hardy-Petrovitch-Hutchinson's 
problem and partial theta function, 
Duke Math. J. 162:5 (2013), 825-861. 
arXiv:1106.6262v1[math.CA].
\bibitem{LuSa} D.S. Lubinsky, E. Saff, Convergence of Pad\'e approximants
  of partial theta functions and the Rogers-Szeg\H{o}
polynomials, Constructive Approximation, 3 (1987), 331-361.
\bibitem{EM} E.T. Mortenson, On the dual nature of partial theta functions
  and Appell-Lerch sums, Adv. Math. 264 (2014), 236-260.
\bibitem{Ost} I.~V.~Ostrovskii, 
On zero distribution of sections and tails 
of power series, 
Israel Math. Conf. Proceedings, 15 (2001), 297-310.
\bibitem{Pe} M.~Petrovitch, 
Une classe remarquable de s\'eries enti\`eres, 
Atti del IV Congresso Internationale dei Matematici, Rome (Ser. 1), 2 
(1908), 36-43. 
\bibitem{Pr} T.~Prellberg, The combinatorics of the leading root of the 
partial theta function, http://arxiv.org/pdf/1210.0095.pdf
\bibitem{So} A.~Sokal, 
The leading root of the partial theta function, 
Adv. Math. 229:5 (2012), 2603-2621. 
arXiv:1106.1003.
\bibitem{Sun} L. H. Sun, An extension of the Andrews-Warnaar partial theta
  function identity. 
Adv. in Appl. Math. 115 (2020), 101985, 20 pp.
\bibitem{WM} J. Wang and X. Ma, On the Andrews-Warnaar identities
  for partial theta functions. Adv. in Appl. Math. 97 (2018), 36-53.
\bibitem{Wa} S. O. Warnaar, 
Partial theta functions. I. Beyond 
the lost notebook, 
Proc. London Math. Soc. (3) 87:2 (2003), 363-395.
\bibitem{Wei} C. Wei, Partial theta function identities from Wang and Ma's
  conjecture.
J. Difference Equ. Appl. 26 (2020), no. 4, 532-539.
\end{thebibliography}
\end{document}